\tikzstyle{every picture}+=[remember picture]
\theoremstyle{plain}
\newtheorem{theorem}{Theorem}[section]
\newtheorem{corollary}[theorem]{Corollary}
\newtheorem{lemma}[theorem]{Lemma}
\theoremstyle{definition}
\newtheorem{definition}[theorem]{Definition}
\newtheorem{claim}[theorem]{Claim}
\newtheorem{remark}[theorem]{Remark}
\newcommand{\PK}[1][K]{\mathcal{P}_{#1}}
\newcommand\+{\mkern2mu}
\newcommand{\R}{\mathbb{R}}
\newcommand{\Z}{\mathbb{Z}}
\newcommand{\C}{\mathbb{C}}
\newcommand{\Q}{\mathbb{Q}}
\newcommand{\Hidden}[1]{}
\newcommand{\F}{\mathcal{F}}
\newcommand{\Sl}{\mathcal{S}}
\newcommand{\ep}{\varepsilon}
\newcommand{\1}{\mathbf 1}
\newcommand{\tthe}[1][j]{\tilde{\theta_{#1}}}
\newcommand{\trho}{\tilde{\rho}}
\newcommand{\tr}{\mathrm{T}}
\DeclareMathOperator{\Tr}{Tr}
\DeclareMathOperator{\im}{Im}
\DeclareMathOperator{\Id}{Id}
\DeclareMathOperator{\cl}{cl}
\DeclarePairedDelimiter{\abs}{\lvert}{\rvert}
\DeclarePairedDelimiter{\znorm}{\lVert}{\rVert}
\DeclarePairedDelimiter{\sprod}{\langle}{\rangle}
\begin{document}

\title{Pretty good fractional revival via magnetic fields: \\ theory and examples}
\author{Whitney Drazen, Mark Kempton\footnote{Department of Mathematics, Brigham Young University, Provo, UT, mkempton@mathematics.byu.edu}, Gabor Lippner\footnote{Department of Mathematics, Northeastern University, Boston, MA, g.lippner@northeastern.edu}}
\date{}

\maketitle

\begin{abstract}
We develop the theory of pretty good quantum fractional revival in arbitrary sized subsets of a graph, including the theory for fractional cospectrality of subsets of arbitrary size.  We use this theory to give conditions under which a magnetic field can induce pretty good fractional revival, and give several examples.
\end{abstract}

\section{Introduction}
``Fractional revival is a quantum transport phenomenon important for entanglement generation in spin networks''~\cite{ChanCoutinhoTamonVinetZhan}. In a continuous time quantum walk fractional revival (FR) refers to the situation when the walk ``preserves'' a subset at a certain moment in time. That is, there is a subset $K$ of the nodes and a time $t$ such that if the initial state of the walk is supported on $K$ then it is also supported on $K$ at time $t$. For entanglement generation one would, typically, be interested in starting the walk from a single vertex $v\in K$ and obtaining a superposition of the nodes in $K$ at time $t$. For more background and a comprehensive characterization of FR see~\cite{chan2020fundamentals}.

Fractional revival is, in a sense, a relaxation of perfect state transfer (PST). Nevertheless, finding examples of FR turned out to be nearly as difficult as for PST. This naturally led to the study of further relaxations of the phenomenon. Just as with PST, one can introduce an asymptotic variant that has now been routinely dubbed ``pretty good \underline{\phantom{PST}}'' in the literature \cite{Vinet2012,Godsil2012}. Informally, \emph{pretty good fractional revival} (PGFR) requires a sequence of times at which the walk is closer and closer to actually preserving the subset $K$. A complete characterization of PGFR between a pair of nodes on paths and cycles was given in~\cite{chan2020approximate}. 

In a series of papers~\cite{us, invol, eisenberg2019pretty} (some of) the present authors have developed methods to construct examples of pretty good (or even perfect) state transfer using a diagonal perturbation of the matrix - sometimes referred to as a magnetic field in the context of quantum spin networks. The goal of the current paper is to extend these ideas to the case of PGFR. In particular, we further develop the theory of PGFR to obtain a practical, verifiable condition that guarantees that a subset of nodes will exhibit pretty good fractional revival after adding a ``generic'' constant diagonal perturbation to the matrix. 

In order to achieve this goal, we
\begin{itemize}
\item devise a way to split the characterization of PGFR into somewhat separate ``eigenvector'' and ``eigenvalue'' conditions,
\item introduce the notion of fractional cospectrality and provide a comprehensive characterization of it in order to construct families of graphs that satisfy the eigenvector part of the condition,
\item prove a suitable generalization of the well-known Kronecker condition for pretty good state transfer to the case of pretty good fractional revival,
\item extend the field-trace method of~\cite{invol} into a tool that allows one to verify this new Kronecker-type condition if certain factors of the characteristic polynomial are irreducible
\item prove that under a generic diagonal perturbation the relevant factors are indeed irreducible. 
\end{itemize}

The paper is structured as follows: in Section~\ref{sec:pgfr} we introduce pretty good fractional revival (PGFR) and provide a spectral characterization. Then we generalize Kronecker's criterion to our setting and, using the field-trace method developed in~\cite{invol}, we derive a sufficient condition for PGFR based on the irreducibility of certain factors of the characteristic polynomial together with a trace and degree condition on these factors.

In Section~\ref{sec:gen-cospectral} we explain our generalization of the idea of cospectrality to the fractional setting, and prove a theory analogous to the characterizations of the original notion. This allows us to prove that for fractionally cospectral subsets, under suitable diagonal perturbations of the adjacency matrix, the factors of the characteristic polynomial relevant to PGFR are indeed irreducible. 

Finally, in Section~\ref{sec:examples} we construct examples where we can prove fractional cospectrality of certain subsets and also verify the trace and degree condition of Theorem~\ref{thm:tr/deg}, thereby guaranteeing PGFR in these graphs.

\section{Pretty good fractional revival}\label{sec:pgfr}


We work in the following general setting: fix an index set $X$ and consider a real symmetric matrix $M \in \R^{X\times X}$. 

\begin{definition}\label{def:pgfr}
Let $K \subset X$ be a subset of indices. We say that the $X \times X$ matrix $M$ exhibits pretty good fractional revival with respect to $K$ if 
\[ \cl \{ \exp(i t M)_{K \times K} : t \geq 0\} \cap U(K) \nsubseteq \{ \rho \Id_{K \times K} : \rho \in \C\}, \]
that is, if we take the family of  $K\times K$ submatrices of $\exp(i t M)$ for all $t \geq 0$, then the closure of this family contains a unitary matrix with at least two distinct eigenvalues.

Equivalently, there is a $K \times K$ unitary matrix $H$ with at least two distinct eigenvalues, and a sequence $0 < t_1 \leq t_2 \leq  \dots$ such that $\lim_{k \to \infty} \exp(i t_k M)_{K \times K} = H$. This includes fractional revival if $t = t_1 = t_2 = \dots$. The convergence can be understood entry wise or, equivalently, with respect to any standard matrix norm.

Since the $K \times K$ submatrix of a unitary matrix $A$ is unitary if and only if $A$ is block-diagonal relative to $K$, a further equivalent way to describe pretty good fractional revival is to require that there is a sequence $0 < t_1 \leq t_2 \leq \dots$ and a matrix $A$ that is block-diagonal relative to $K$ such that $\lim_{k\to \infty} \exp(it_k M) = A$, and $A_{K\times K}$ has at least two distinct eigenvalues.

\end{definition}

\begin{remark}
This generalizes the concept of pretty good fractional revival on 2 nodes of a graph from \cite{chan2020approximate}:
Two vertices $u$ and $v$ of a graph $G$ with adjacency matrix $A$ exhibit pretty good fractional revival if, for all $\epsilon>0$ there is some time $t>0$ such that
\[
|e^{itA}(u,u)|^2+|e^{itA}(u,v)|^2 > 1-\epsilon.
\]
\end{remark}

\begin{remark}
It would be perhaps more natural to require a non-diagonal $K \times K$ unitary matrix in the closure of $\exp(it M)_{K \times K}$ instead of simply one that's not the multiple of the identity. However, it turns out that for primitive matrices (eg for adjacency matrices of connected graphs) this stronger requirement is equivalent to the one in Definition~\ref{def:pgfr}. See the second part of Theorem~\ref{thm:pgfr_pk} for an explanation of this.
\end{remark}

Notice that $\{ \exp(i t M) : t \geq 0\}  \subset \sprod{M}$ is a bounded subset of the (finite dimensional) polynomial algebra generated by $M$. Hence if $M$ exhibits pretty good fractional revival with respect to $K$ and $\lim_{k \to \infty} \exp(i t_k M)_{K \times K} = H$ then there is a unitary matrix $\hat{H} \in \sprod{M}$ that is block diagonal relative to $K$, and $\Id_{K \times K} \neq H = \hat{H}_{K\times K}$. 

\subsection{Non-degenerate partition of the spectrum}

Here we provide a spectral characterization of pretty good fractional revival that can be summarized as follows: the subset $K$ induces a natural partition $\PK$ of the eigenvalues of $M$, and pretty good fractional revival is exhibited with respect to $K$ if and only if a certain simultaneous approximation problem is solvable in this partition. 

Let us start by recalling some important notions and facts from~\cite{chan2020fundamentals}.

\begin{definition}\label{def:partition}
Let $M = \sum_{i=1}^d \theta_i E_i$ be the spectral decomposition of $M$, and $K \subset X$ a non-empty set of indices.
\begin{enumerate}
\item We denote by $D_K$ the $X \times X$ diagonal matrix whose entries on the diagonal are 1 in $K$ and 0 outside of $K$. 
\item The \emph{eigenvalue support} of $K$ is the binary relation \[ \Phi_K = \{ (\theta_r, \theta_s) : E_r D_K E_s \neq 0 \}.\]
\item Define $\PK = (\Pi_0, \Pi_1,\dots, \Pi_s)$ to be the partition of $\{1,2,\dots, d\}$ where 
\[ \Pi_0 = \{ i: (E_i)_{K\times K} = 0 \},\] and $\Pi_1,\dots, \Pi_s$ are the remaining equivalence classes of the transitive closure of $\Phi_K$.
\end{enumerate}
\end{definition}

\begin{lemma}[Lemma 2.5 and Theorem 2.10 from~\cite{chan2020fundamentals}]\label{lem:decomposable}
$A = \sum_j c_j E_j$ is block-diagonal relative to $K$ if and only if the $c_j$s are equal to each other within each part $\Pi_r : 1\leq r \leq s$.
\end{lemma}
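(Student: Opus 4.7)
The plan is to reduce block-diagonality to a commutator condition and then diagonalize that condition against the spectral idempotents. Being block-diagonal relative to $K$ means that the $K \times K^c$ and $K^c \times K$ blocks of $A$ vanish, which is equivalent to $A D_K = D_K A D_K = D_K A$, i.e.\ to the commutation $[A, D_K] = 0$. Using $E_a E_b = \delta_{ab} E_a$ together with $A = \sum_j c_j E_j$, a direct computation gives
\[
E_r [A, D_K] E_t = c_r\, E_r D_K E_t - c_t\, E_r D_K E_t = (c_r - c_t)\, E_r D_K E_t
\]
for every pair $(r, t)$. Since $\sum_r E_r = \Id$ and the pieces $E_r X E_t$ form an orthogonal direct-sum decomposition of any matrix $X$, the commutator vanishes if and only if $(c_r - c_t)\, E_r D_K E_t = 0$ for every $r, t$.

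Next I would translate this pairwise condition into the partition language. The key step concerns $\Pi_0$: if $j \in \Pi_0$ then $(E_j)_{K \times K} = 0$, and since $E_j$ is a positive semidefinite orthogonal projector, the inequality $|(E_j)_{k,x}|^2 \leq (E_j)_{k,k}(E_j)_{x,x}$ forces $(E_j)_{k,x} = 0$ for every $k \in K$ and every $x \in X$. Hence $D_K E_j = 0 = E_j D_K$, so that $E_r D_K E_j$ and $E_j D_K E_r$ vanish for every $r$, and the coefficients $c_j$ with $j \in \Pi_0$ remain completely unconstrained. For $r, t \notin \Pi_0$ the surviving condition reads ``$c_r = c_t$ whenever $(\theta_r, \theta_t) \in \Phi_K$''; taking the transitive closure of the (symmetric) relation $\Phi_K$ on $\{1,\dots,d\}\setminus \Pi_0$ produces exactly the equivalence classes $\Pi_1, \dots, \Pi_s$, and the condition becomes equality of the $c_j$ within each $\Pi_r$ for $1 \leq r \leq s$, as claimed.

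I expect the only real obstacle to be the short positive-semidefinite argument that promotes the definitional condition $(E_j)_{K \times K} = 0$ to the stronger statement $D_K E_j = 0$; without this step, pair-contributions containing a $\Pi_0$-index would not vanish automatically and the clean partition statement would acquire spurious constraints on the $\Pi_0$-coefficients. Everything else is routine manipulation of the spectral resolution of $M$.
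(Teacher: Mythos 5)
Your proof is correct and complete: the reduction of block-diagonality to $[A,D_K]=0$, the computation $E_r[A,D_K]E_t=(c_r-c_t)E_rD_KE_t$, and the positive-semidefinite step showing $(E_j)_{K\times K}=0$ forces $D_KE_j=0$ (so $\Pi_0$-indices never occur in $\Phi_K$ and their coefficients are unconstrained) together give exactly the stated equivalence. The paper itself offers no proof, citing the result from Chan et al., but your argument is the standard one for this lemma, so nothing further is needed.
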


\begin{definition}\label{def:non-deg}
The partition $\PK$ is \emph{non-degenerate} if there is a \emph{mod 1 non-constant} vector $(\rho_1, \dots, \rho_s) \in \R^s$ such that for all $\ep > 0$ there is a $t > 0$ so that for any $1 \leq r \leq s$ 
\begin{equation}\label{eq:approximate} \forall j \in \Pi_r : \znorm{ t \cdot \theta_j - \rho_r} < \ep.\end{equation}
where $\znorm{x} = \min\{\abs{x-n} : n \in \Z\}$ is the distance of $x$ to the nearest integer. In particular, $s \geq 2$ is required.

\end{definition}

\begin{theorem}\label{thm:pgfr_pk}
The matrix $M$ exhibits pretty good fractional revival with respect to $K$ if and only if the partition $\PK$ is non-degenerate. Furthermore, if $M$ is primitive then pretty good fractional revival also implies that $\cl \{ \exp(i t M)_{K \times K} : t \geq 0\} \cap U(K)$ contains a non-diagonal matrix.
\end{theorem}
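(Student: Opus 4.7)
The plan is to characterize the matrices $A \in \cl\{\exp(itM) : t \geq 0\}$ that are block-diagonal relative to $K$, identify their $K \times K$ blocks in terms of the partition $\PK$, and then match this with the approximation condition of Definition~\ref{def:non-deg}. The key auxiliary objects will be the matrices $F_r := \sum_{j\in\Pi_r}(E_j)_{K\times K}$ for $r\geq 1$, which I will show form a family of mutually orthogonal projections on $\C^K$ summing to $\Id_{K\times K}$, and hence are linearly independent.

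The main technical step is establishing the orthogonality $F_rF_{r'}=0$ for $r\neq r'$. The first ingredient is that $(E_i)_{K\times K}=0$ forces $E_iD_K=0$: since $E_i$ is an orthogonal projection, $D_KE_iD_K=0$ gives $(E_iD_K)^T(E_iD_K)=0$ and hence $E_iD_K=0$, so the $\Pi_0$ indices drop out entirely on the $K$ side. The second ingredient is that the $\Pi_r$ with $r\geq 1$ are by construction the equivalence classes of the transitive closure of $\Phi_K$, so that $E_iD_KE_j=0$ whenever $i,j$ lie in distinct parts. Reading these relations off on the $K\times K$ block gives $F_rF_{r'}=0$; completeness $\sum_{r\geq 1}F_r=\Id_{K\times K}$ (which follows from $\sum_jE_j=\Id$ together with the vanishing on $\Pi_0$) then forces $F_r=F_r^2$. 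Consequently $\sum_r c_rF_r$ is a scalar multiple of $\Id_{K\times K}$ if and only if all $c_r$ coincide.

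With this structure in place, both directions of the biconditional reduce to bookkeeping. By compactness, any sequence $t_k$ admits a subsequence along which $\exp(it_kM)\to A=\sum_j\alpha_jE_j$ with $|\alpha_j|=1$; Lemma~\ref{lem:decomposable} then says $A$ is block-diagonal relative to $K$ exactly when the $\alpha_j$ are constant on each $\Pi_r$, $r\geq 1$, in which case $A_{K\times K}=\sum_rc_rF_r$ with $c_r$ the common value. Non-scalarity of $A_{K\times K}$ is thus equivalent to the $c_r$ not all being equal, which (after the obvious change of variables $c_r=e^{i\rho_r}$) is the mod-1 non-constant approximation condition of Definition~\ref{def:non-deg}. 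Both implications follow by reading this correspondence in the appropriate direction; in the passage from non-degeneracy to PGFR one additionally extracts a subsequence to make the unconstrained $\alpha_j$ with $j\in\Pi_0$ converge.

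For the furthermore clause, I will bring in the Perron--Frobenius eigenvector: primitivity of $M$ gives a strictly positive $v$ with $Mv=\theta_1v$, so $E_1=vv^T/\|v\|^2$ has a nowhere-zero $K\times K$ block, and in particular $1\notin\Pi_0$. Any $A=\sum_j\alpha_jE_j$ in the closure satisfies $Av=\alpha_1v$; if additionally $A$ commutes with $D_K$, then $Av_K=D_KAv=\alpha_1v_K$, so $v_K$ is an eigenvector of $A_{K\times K}$ with eigenvalue $\alpha_1$. Were $A_{K\times K}$ diagonal with entries $\mu_u$, the equations $\mu_uv_u=\alpha_1v_u$ together with $v_u>0$ would force $\mu_u=\alpha_1$ for every $u\in K$, making $A_{K\times K}$ scalar; contrapositively, any non-scalar $A_{K\times K}$ produced by the first part of the theorem must be non-diagonal. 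The step I expect to be the main obstacle is pinning down $F_rF_{r'}=0$ cleanly: verifying that the transitive-closure definition of $\PK$ really does encode $E_iD_KE_j=0$ across classes, and correctly handling the $\Pi_0$ indices, which disappear on the $K\times K$ block but still contribute freely to $A$ itself. Once that is settled, the Kronecker-style approximation correspondence and the Perron argument are routine.
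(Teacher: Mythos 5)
Your proof is correct. The biconditional is handled essentially as in the paper: both arguments pass to a limit $A=\sum_j\alpha_jE_j$, invoke Lemma~\ref{lem:decomposable} to force the coefficients to be constant on each part, and translate the resulting $\rho_r$'s into Definition~\ref{def:non-deg}. Your one genuine addition there is the explicit verification that the matrices $F_r=\sum_{j\in\Pi_r}(E_j)_{K\times K}$ are nonzero mutually orthogonal projections summing to $\Id_{K\times K}$ (via $E_iD_K=0$ for $i\in\Pi_0$ and $E_iD_KE_j=0$ across parts), so that $\sum_rc_rF_r$ is scalar iff the $c_r$ coincide; the paper asserts this equivalence without proof, implicitly deferring to the structure theory of \cite{chan2020fundamentals}, so your lemma fills in a step the paper glosses over. (Do note explicitly that each $F_r\neq 0$, which follows since it is a sum of nonzero positive semidefinite blocks; this is needed both for linear independence and for reading off $c_{r_0}=\lambda$ from $c_{r_0}F_{r_0}=\lambda F_{r_0}$.) The ``furthermore'' clause is where you genuinely diverge: the paper adapts Lemma 2.9 of \cite{chan2020fundamentals}, showing that a diagonal non-scalar $H$ would force $e_y^{\tr}E_je_x=0$ for all $j$ and hence $e_y^{\tr}M^ne_x=0$ for all $n$, contradicting primitivity; you instead use the Perron--Frobenius eigenvector $v>0$, observe $A_{K\times K}v_K=\alpha_1v_K$ for any block-diagonal $A$ in the closure, and conclude that a diagonal $A_{K\times K}$ must be scalar. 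Your route is shorter and isolates exactly where positivity enters, at the cost of using the full Perron--Frobenius theorem rather than only the elementary fact that $M^n>0$ for large $n$; both are legitimate uses of primitivity and both proofs are complete.
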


\begin{proof}
First assume $M$ exhibits pretty good fractional revival with respect to $K$, that is, there is a block-diagonal matrix $A$ such that $H  = A_{K\times K}$ has at least two distinct eigenvalues, and a sequence $0 < t_1 \leq t_2 \leq \dots$ such that $\lim_{k\to \infty} \exp(2\pi i t_k M) = A$. Since $\exp(2\pi i t M) = \sum_j \exp(2\pi i t \theta_j) E_j$, it follows that $\mu_j = \lim_{k\to \infty} exp(2\pi i t_k \theta_j)$ exists for all $j$, and that $A = \sum \mu_j E_j$. By Lemma~\ref{lem:decomposable} this implies that there exist reals $\rho_1, \dots, \rho_s$ such that $\mu_j = \exp(2\pi i \rho_r)$ for all $j \in \Pi_r$. Thus $\znorm{t_k \theta_j - \rho_r} \to 0$ as $k\to \infty$ for all $j \in \Pi_r$. Finally, since $H$ has at least two distinct eigenvalues, all the $\mu_j$s can't be identical, which implies that all the $\rho_r$ cannot be congruent mod 1. This proves the only if part.

Conversely, if we are given $(\rho_1,\dots, \rho_s)$ that proves the non-degenerateness of $\PK$, let $t_k$ be the time for which \eqref{eq:approximate} holds for $\ep = 1/k$. Then, clearly, $\lim_{k\to \infty} \exp(2\pi i t_k \theta_j) = \exp(2\pi i \rho_r)$ for all $j \in \Pi_r$. Choose a subsequence for which $\gamma_j = \lim_{k \to \infty} \exp(2\pi i t_k \theta_j)$ also exists for all $j \in \Pi_0$. (This can be done simply by compactness.) Then 
\begin{equation}\label{eq:closure} 
A =  \lim_{k\to \infty} \exp(2\pi i t_k M) = \sum_{j \in \Pi_0} \gamma_j E_j + \sum_{r=1}^s \left(\exp(2\pi i \rho_r) \sum_{j \in \Pi_r} E_j\right)
\end{equation} 
is block-diagonal by Lemma~\ref{lem:decomposable}, and since not all the $\rho_r$s are congruent mod 1, the restriction $H = A_{K\times K}$ is not a scalar multiple of the identity.

To prove that the non-degeneracy of $\PK$ implies the existence of non-diagonal matrices in $\cl \{ \exp(i t M)_{K \times K} : t \geq 0\} \cap U(K)$, it suffices to show that $H = A_{K\times K}$ is non-diagonal. The argument below is adapted from the proof of Lemma 2.9 in~\cite{chan2020fundamentals}. Let $x \in X$ and let $e_x$ denote the corresponding standard basis vector. Then, for any $j \in \Pi_r$ we can write
\[ H(x,x) E_j e_x = E_j A e_v = \exp(2\pi i \rho_r) E_j e_x,\] which implies that $E_j e_x = 0$ unless $\exp(2\pi i \rho_r) = H(x,x)$. By symmetry the same holds for $e_x^{\tr} E_j$.  

Now suppose, for a contradiction, that $H$ is diagonal. Since it is not a scalar multiple of the identity according to the first half of the theorem, we can find two elements $x,y \in X$ such that $H(x,x) \neq H(y,y)$. Thus $e_y^{\tr} E_j e_x = 0$ for all $j$, since $\exp(2\pi i \rho_r)$ can't equal to both $H(x,x)$ and $H(y,y)$. Then, however, $e_y^{\tr} M^n e_x = \sum_j \theta_j^s e_y^{\tr} E_j e_x = 0$ for any integer $n\geq 0$ which contradicts the primitivity of $M$. 
\end{proof}

\subsection{A number theoretic characterization}

In this section we develop a method to verify that a partition is degenerate. This can be considered as the generalization of previous results for the case of pretty good state transfer. Its basis, as in earlier results, is the following number theoretic lemma due to Kronecker.

\begin{lemma}[Kronecker]\label{lem:Kron}
Let $\theta_1,...,\theta_k$ and $\zeta_1,...,\zeta_k$ be arbitrary real numbers.  For an arbitrarily small $\ep > 0$, the system of inequalities
\[
\znorm{\theta_ j y - \zeta_j} < \ep \ (j=1,...,k),
\]
has a solution $y$ if and only if, for integers $\ell_1,...,\ell_k$, 
\[
\ell_1\theta_1+\cdots+\ell_k\theta_k = 0,
\]
implies
\[
\znorm{\ell_1\zeta_1 + \cdots + \ell_k\zeta_k}= 0.
\]
\end{lemma}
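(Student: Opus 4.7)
The plan is to split into the two implications. The forward (necessity) direction is a short manipulation: given $\sum_j \ell_j \theta_j = 0$ and a solution $y$ to the approximation system, pick integers $n_j$ with $|\theta_j y - \zeta_j - n_j| < \ep$. Multiplying by $\ell_j$ and summing makes the $y$-terms cancel, leaving
\[
\Bigl| \sum_j \ell_j \zeta_j + \sum_j \ell_j n_j \Bigr| < \Bigl(\sum_j |\ell_j|\Bigr)\ep,
\]
so $\znorm{\sum_j \ell_j \zeta_j} < \bigl(\sum_j |\ell_j|\bigr)\ep$. Taking $\ep \to 0$ forces $\znorm{\sum_j \ell_j \zeta_j} = 0$, as required.

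For the converse (sufficiency), which is the substantive direction and essentially Kronecker's classical simultaneous approximation theorem, I would translate to the torus $\mathbb{T}^k = (\R/\Z)^k$. Consider the continuous homomorphism $\psi \colon \R \to \mathbb{T}^k$ given by $\psi(y) = (\theta_1 y, \dots, \theta_k y) \bmod 1$, and let $H$ be the closure of its image. Then $H$ is a closed subgroup of $\mathbb{T}^k$, and the solvability of the approximation system for all $\ep>0$ is, by definition, equivalent to the condition $(\zeta_1,\dots,\zeta_k) \bmod 1 \in H$.

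The heart of the argument is to identify $H$ via Pontryagin duality. The characters of $\mathbb{T}^k$ are indexed by $\Z^k$, and the annihilator of $H$ is
\[
H^{\perp} = \bigl\{ \ell \in \Z^k : \sum_j \ell_j h_j \in \Z \text{ for all } h \in H \bigr\}.
\]
Because $H$ contains the image of $\psi$ and the character $h \mapsto \sum_j \ell_j h_j$ must vanish on that image, $\ell \in H^{\perp}$ forces $\sum_j \ell_j \theta_j y \in \Z$ for every $y \in \R$, which is only possible when $\sum_j \ell_j \theta_j = 0$; the converse inclusion is immediate. Thus $H^{\perp}$ is exactly the lattice of integer relations among the $\theta_j$. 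The standard closed-subgroup theorem for $\mathbb{T}^k$ then gives $H = (H^{\perp})^{\perp}$, i.e.\ $h \in H$ iff $\sum_j \ell_j h_j \in \Z$ for every $\ell \in \Z^k$ satisfying $\sum_j \ell_j \theta_j = 0$. Applying this to $h = (\zeta_1,\dots,\zeta_k)$ and invoking the hypothesis gives $(\zeta_1,\dots,\zeta_k) \in H$, which is the desired simultaneous approximation.

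The main obstacle is the duality statement $H = (H^{\perp})^{\perp}$ for closed subgroups of the torus. This is standard but not trivial; I would either cite it from classical harmonic analysis (e.g.\ Rudin's \emph{Fourier Analysis on Groups}, or the original proofs in Hardy and Wright) or, if a self-contained proof is desired, deduce it by a direct Fourier-series argument: any continuous function vanishing on $H$ can be uniformly approximated by trigonometric polynomials whose nonzero frequencies lie in $H^{\perp}$, and if a point $h_0 \notin H$ existed with $\sum_j \ell_j (h_0)_j \in \Z$ for all $\ell \in H^{\perp}$, such an approximation would yield a contradiction via Urysohn separation on the compact group $\mathbb{T}^k$. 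Since this is the classical Kronecker theorem, however, I would in practice simply cite it rather than reprove the duality from scratch.
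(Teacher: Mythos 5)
The paper does not prove this lemma at all: it is stated as a classical result due to Kronecker and used as a black box (the paper's own contribution begins with Lemma~\ref{thm:kronecker}, which builds on it). So there is nothing in the paper to match your argument against, and the only question is whether your proof is sound. It is. The necessity direction is the standard telescoping computation and is complete as written. For sufficiency, your reduction is correct: letting $H$ be the closure of the one-parameter subgroup $\{(\theta_1 y,\dots,\theta_k y) \bmod 1\}$, solvability of the system for every $\ep>0$ is precisely the statement $(\zeta_1,\dots,\zeta_k)\bmod 1\in H$; your identification of $H^{\perp}$ with the lattice of integer relations $\sum_j\ell_j\theta_j=0$ is right (the character $y\mapsto \sum_j\ell_j\theta_j y$ is continuous and integer-valued, hence constant, hence zero); and the hypothesis then says exactly that $(\zeta_j)$ annihilates $H^{\perp}$. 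You correctly isolate the one genuinely nontrivial input, the bi-duality $H=(H^{\perp})^{\perp}$ for closed subgroups of $\mathbb{T}^k$, and citing it (Rudin, or Hardy--Wright's Theorem 442/444, which is the form closest to the statement here) is entirely appropriate --- that is no weaker than what the paper itself does, since the paper cites the whole lemma. The only cosmetic point: the classical sources state the theorem for a single real parameter $y$ exactly as here, so if you cite Hardy--Wright you can cite the full equivalence directly rather than just the duality step.
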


\begin{lemma}\label{thm:kronecker}
Given a sequence of real numbers $\theta_1, \dots, \theta_k$ and a partition $\mathcal{P} = (\Pi_1,\dots,\Pi_s)$ of $\{1,2,\dots, k\}$, the following are equivalent:
\begin{enumerate}[label=\roman*)]
\item $\mathcal{P}$ is non-degenerate in the sense of Definition~\ref{def:non-deg}.
\item There is a pair of indices $1 \leq r_1, r_2 \leq s$ such that no sequence integers $\ell_1, \ell_2, \dots, \ell_k$ satisfies all of the following:
\begin{enumerate}[label=\alph*)]
\item $\sum_1^k \ell_j \theta_j = 0$,
\item $\sum_{j\in\Pi_r}\ell_j=0$ for all $r \neq r_1,r_2$ 
\item $\sum_{j\in\Pi_{r_1}}\ell_j=-1$ and  $\sum_{j\in\Pi_{r_2}}\ell_j=1$.
\end{enumerate}

\end{enumerate}
\end{lemma}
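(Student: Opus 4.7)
My plan is to use Kronecker's Lemma (Lemma~\ref{lem:Kron}) as the bridge between simultaneous approximability and integer linear relations. The key translation: for a candidate mod-$1$ non-constant vector $(\rho_1,\dots,\rho_s) \in \R^s$, I will define $\zeta_j := \rho_r$ whenever $j \in \Pi_r$, so that the simultaneous approximation in Definition~\ref{def:non-deg} becomes exactly the Kronecker approximation hypothesis for the sequences $(\theta_j)$ and $(\zeta_j)$. By Lemma~\ref{lem:Kron}, this reformulates non-degeneracy of $\mathcal{P}$ as the existence of a mod-$1$ non-constant $(\rho_r)$ with the property that for every integer sequence $(\ell_1, \dots, \ell_k)$ satisfying $\sum_j \ell_j \theta_j = 0$,
\[ \sum_{r=1}^{s}\Bigl(\sum_{j\in\Pi_r}\ell_j\Bigr)\rho_r \in \Z. \]

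For (i) $\Rightarrow$ (ii), I would take such a $(\rho_r)$ and pick indices $r_1, r_2$ with $\rho_{r_1} \not\equiv \rho_{r_2} \pmod 1$ (which exist by non-constancy). Any integer sequence $(\ell_j)$ satisfying conditions (a), (b), (c) would, via the displayed implication, force $\rho_{r_2} - \rho_{r_1} \in \Z$, contradicting the choice of $r_1, r_2$.

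For (ii) $\Rightarrow$ (i), I will work with the subgroup
\[ \Lambda := \Bigl\{\bigl(\textstyle\sum_{j\in\Pi_1}\ell_j, \ldots, \sum_{j\in\Pi_s}\ell_j\bigr) : (\ell_j) \in \Z^k,\ \sum_j \ell_j\theta_j = 0\Bigr\} \subset \Z^s. \]
Assumption (ii) rephrases as: for some $r_1, r_2$, the vector $e_{r_2}-e_{r_1}$ represents a nonzero element of the quotient $G := \Z^s/\Lambda$. It then suffices to produce $\rho \in \R^s$ satisfying $\rho \cdot m \in \Z$ for every $m \in \Lambda$ yet $\rho_{r_2}-\rho_{r_1}\notin\Z$; setting $\zeta_j = \rho_r$ and invoking Kronecker will complete the proof. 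Each such $\rho$ corresponds to a homomorphism $G \to \R/\Z$ via $m \mapsto \rho \cdot m \pmod 1$, and conversely every such homomorphism lifts back to an $\R^s$-vector by choosing real preimages of the values $\psi(e_i) \in \R/\Z$. Since $G$ is a finitely generated abelian group, its Pontryagin dual separates points, so the nonzero image of $e_{r_2}-e_{r_1}$ is detected by some character, yielding the required $\rho$.

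The main obstacle will be that final step of the reverse direction: extracting the witness $\rho$ from the single hypothesis $e_{r_2}-e_{r_1}\notin \Lambda$. The character-theoretic observation above is the cleanest route, though equivalently one could pass to the Smith normal form of $\Lambda$ and read off a compatible $\rho$ coordinatewise on an adapted basis of $\Z^s$. Everything else in the argument is a direct translation through Kronecker's Lemma.
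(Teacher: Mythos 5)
Your proof is correct, and it follows the same basic skeleton as the paper's --- Kronecker's Lemma~\ref{lem:Kron} plus a duality statement for subgroups of an integer lattice --- but the execution differs in two ways worth noting. First, for i) $\Rightarrow$ ii) the paper does not invoke Kronecker at all: it runs a direct $\ep$-estimate on $\znorm{\sum \ell_j(t\theta_j - \rho_r)}$ to conclude $\znorm{\rho_{r_1}-\rho_{r_2}}=0$; your reformulation of non-degeneracy through the ``only if'' half of Kronecker accomplishes the same thing and makes the two directions symmetric. Second, and more substantively, for ii) $\Rightarrow$ i) the paper first normalizes by setting $\tthe = \theta_j - \theta_1$, builds the relation lattice $\Sl \subsetneq \Z^{s-1}$ indexed by the parts $\Pi_2,\dots,\Pi_s$ only, extracts a non-integral dual vector $(\trho_2,\dots,\trho_s) \in \Sl^*$ via a determinant argument, applies Kronecker to the differences, and then must recover $\rho_1$ as a mod-$1$ accumulation point of $t(\ep)\theta_1$ by compactness. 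You instead work directly with the full lattice $\Lambda \subset \Z^s$ of partition-sums of relations among the original $\theta_j$, observe that ii) says exactly $e_{r_2}-e_{r_1} \notin \Lambda$, and produce the witness $\rho$ from a character of $\Z^s/\Lambda$ not vanishing on that class (separation of points for finitely generated abelian groups, or equivalently Smith normal form). This buys you a cleaner argument: no normalization, no auxiliary accumulation-point step, and Kronecker applied once to the original data yields Definition~\ref{def:non-deg} verbatim. The one shared loose end --- Lemma~\ref{lem:Kron} produces some real solution $y$ while the definition asks for $t>0$ --- is handled identically (i.e., silently) in both arguments and is harmless since Kronecker's solutions can be taken arbitrarily large.
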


\begin{remark}
Note that this lemma is a direct generalization of \cite{chan2020approximate}[Theorem 2.4] which only considers the $s=2$ case. The only essential new ingredient in the following proof is the use of integer lattices in place of subgroups of $\Z$.
\end{remark}

\begin{proof}
First, suppose that $\mathcal{P}$ is non-degenerate as witnessed by the sequence $(\rho_1,\dots, \rho_s)$ where $\znorm{\rho_{r_1} - \rho_{r_2}} > 0$ for some $r_1, r_2$. Assume, for the same $r_1,r_2$, that there exists integers $\ell_1,\dots, \ell_k$ satisfying the above criteria. Let $\ep = \ep_0/(k \max\{|\ell_j|\})$, and choose $t$ such that  for any $1 \leq r \leq s$ 
\begin{equation*} \forall j \in \Pi_r : \znorm{ t \cdot \theta_j - \rho_r} < \ep.
\end{equation*}
Then for all $j\in \Pi_r$
\[ \znorm{ t \ell_j \theta_j - \ell_j \rho_r} < \ep_0/k\] 
and thus
\[ \znorm*{ \sum_1^k t \ell_j \theta_j - \sum_{r=1}^s \rho_r \sum_{j \in \Pi_r} \ell_j } < \ep_0.
\]
Here the left and the right sums are both $\rho_{r_1}-\rho_{r_2}$ and the middle is 0. Thus $\znorm{\rho_{r_1} - \rho_{r_2}} < \ep_0$. Since this holds for any $\ep_0$, we get that $\znorm{\rho_{r_1} - \rho_{r_2}}=0$, contradicting the choice of $r_1,r_2$. This proves the $i) \Rightarrow ii)$ implication.

Next, we prove $ii) \Rightarrow i)$. Assume, without loss of generality, that $r_1 = 1$ and $r_2 = 2$ is a pair of indices for which $ii)$ holds. Further assume, again without loss of generality, that $\theta_1 \in \Pi_1$.  Let $\tthe = \theta_j - \theta_1 : j = 2, \dots, k$. Note that 
\begin{equation}\label{eq:ell1} \sum_{j=2}^k \ell_j \tthe = 0 \mbox{ implies } \sum_1^k \ell_j \theta_j = 0 \mbox{ and } \sum_1^k \ell_j = 0 \mbox{ where $\ell_1 = -\sum_2^k \ell_j$}.\end{equation}

Consider the following integer lattice 
\[ \Sl = \{ (a_2, \dots, a_s) \in \Z^{s-1} | \exists\+ \ell_2, \dots, \ell_k \in \Z : \sum_2^k \ell_j \tthe = 0 \mbox{ and } a_r = \sum_{j \in \Pi_r} \ell_j  \mbox{ for all $2 \leq r \leq s$} \}.\]
By \eqref{eq:ell1} we see that $(1,0,0,\dots,0) \not \in \Sl$ hence $\Sl \subsetneq \Z^{s-1}$. This implies that the dual lattice $\Sl^* \supsetneq \Z^{s-1}$. (For instance, because the determinant of $\Sl$ has to be greater than 1, and thus the determinant of the dual has to be less than 1 in absolute value so it can't be an integer lattice.) In other words, there exists vector $(\trho_2,\dots, \trho_s)$ that is not congruent to $(0,0,\dots,0)$ mod 1, such that $\sum_2^s \trho_r a_r \in \Z$ for all $(a_2,\dots,a_s) \in \Sl$. 

Now let 
\[ \zeta_j = \left\{ 
\begin{array}{lll}
\trho_r & \mbox{ if } & j \in \Pi_r, r\geq 2\\
0 & \mbox{ if } & j \in \Pi_1
\end{array}
\right.
\]
If $\ell_2, \dots, \ell_k$ are integers such that $\sum_2^k \ell_j \tthe = 0$, then $(a_2,\dots,a_s) \in \Sl$ for $a_r = \sum_{j \in \Pi_r} \ell_r : r=2,\dots, s$. Hence  
\[ \sum_2^k \ell_j \zeta_j = \sum_{r=1}^s \left( \trho_r \sum_{j \in \Pi_r} \ell_j \right) = 0 + \sum_{r=2}^s \trho_r a_r  \in \Z\] by the choice of $(\trho_2,\dots, \trho_s) \in \Sl*$. So Lemma~\ref{lem:Kron} implies that for any $\ep > 0$ there is a $t = t(\ep)$ such that we have $\abs{t \tthe - \trho_r} < \ep \pmod{1} $ for all $j = 2,\dots,k$ where $\Pi_r$ is the partition containing the index $j$. To finish the proof, let $\rho_1$ be a mod 1 accumulation point of the sequence $t(\ep) \theta_1$ as $\ep \to 0$. Then for any $\ep >0 $ there is a $t = t(\ep)$ such that $\znorm{t \theta_1 -\rho_1} < \ep$ and $\abs{t \tthe - \trho_r} < \ep \pmod{1} $ for all $j$, simultaneously. Set $\rho_r = \trho_r + \rho_1$. Then $(\rho_1, \dots, \rho_s)$ is not the constant vector mod 1. Since $\znorm{t(\ep) \theta_j - \rho_r } \leq \znorm{t(\ep) \tthe - \trho_r} + \znorm{t(\ep) \theta_1 - \rho_1} < 2\ep$ for $j = 2,\dots,k$ where $\Pi_r$ is the partition containing the index $j$. Thus the partition is non-degenerate.
\end{proof}

In general, verifying ii) is difficult. Here we generalize a tool from~\cite{eisenberg2019pretty} that allows to show ii) holds under certain conditions that are easy to verify.

\begin{theorem}\label{thm:tr/deg}

Fix a field $\F$, a sequence of real numbers $\theta_1, \dots, \theta_k$ and a partition $\mathcal{P} = (\Pi_1,\dots,\Pi_s)$ of $\{1,2,\dots, k\}$. Suppose there are polynomials $P_1,P_2,\dots, P_s  \in \F[x]$ that are irreducible over $\F$ and such that $\Pi_r$ is exactly the set of roots of $P_r$ for each $1\leq r \leq s$. 

If for some pair of integers $1 \leq r_1, r_2 \leq s$
\[
\frac{\Tr(P_{r_1})}{\deg(P_{r_1})}\neq\frac{\Tr(P_{r_2})}{\deg(P_{r_2})},
\]
where $\Tr$ denotes the trace (i.e. the sum of roots) of a polynomial, then the partition $\mathcal{P}$ is non-degenerate. 
\end{theorem}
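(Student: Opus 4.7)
The plan is to combine the Kronecker-type criterion of Lemma~\ref{thm:kronecker} with a Galois-theoretic averaging argument that exploits the irreducibility of each $P_r$. By Lemma~\ref{thm:kronecker}, it suffices to verify condition ii) for the specific pair $r_1, r_2$ singled out by the trace/degree hypothesis. So I will assume, for contradiction, that integers $\ell_1, \dots, \ell_k$ exist satisfying (a), (b), (c) of that condition for this $r_1, r_2$, and derive $\Tr(P_{r_1})/\deg(P_{r_1}) = \Tr(P_{r_2})/\deg(P_{r_2})$.

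The key step is to symmetrize the relation $\sum_j \ell_j \theta_j = 0$ over the Galois group. Let $E/\F$ be a Galois extension of $\F$ containing all the $\theta_j$ (for instance, the splitting field of $\prod_r P_r$; since all $\theta_j$ are real the base field lies in characteristic $0$, so separability is automatic), and let $G = \mathrm{Gal}(E/\F)$. For every $\sigma \in G$, applying $\sigma$ to the equation $\sum_j \ell_j \theta_j = 0 \in \F$ yields $\sum_j \ell_j\, \sigma(\theta_j) = 0$. Summing over $\sigma \in G$ and interchanging the order of summation gives
\[
0 \;=\; \sum_{j=1}^k \ell_j \sum_{\sigma \in G} \sigma(\theta_j).
\]
Because $P_r$ is irreducible over $\F$, the Galois group $G$ acts transitively on the set of roots $\Pi_r$, so for $j \in \Pi_r$ the orbit-stabilizer theorem gives $\sum_{\sigma \in G} \sigma(\theta_j) = (|G|/\deg P_r)\,\Tr(P_r)$. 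Substituting and grouping the sum by parts:
\[
0 \;=\; |G| \sum_{r=1}^{s} \frac{\Tr(P_r)}{\deg(P_r)} \Bigl(\sum_{j \in \Pi_r} \ell_j\Bigr).
\]

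Now invoke the constraints (b) and (c): all inner sums vanish except those indexed by $r_1$ and $r_2$, which contribute $-1$ and $+1$ respectively. Dividing by $|G|$ yields $\Tr(P_{r_2})/\deg(P_{r_2}) - \Tr(P_{r_1})/\deg(P_{r_1}) = 0$, contradicting the hypothesis. Hence no such $\ell_j$ exist, condition ii) of Lemma~\ref{thm:kronecker} holds, and the partition is non-degenerate.

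The routine obstacle, such as it is, is just making the Galois setup precise (choosing the right splitting field, noting characteristic $0$, and carefully applying orbit-stabilizer); the conceptual heart of the argument is just the observation that averaging an $\F$-linear integer relation among the $\theta_j$'s over the Galois group replaces each root by its normalized trace $\Tr(P_r)/\deg(P_r)$, reducing condition ii) to a single linear equation in the quantities $\Tr(P_r)/\deg(P_r)$.
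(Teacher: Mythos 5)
Your proposal is correct and is essentially the paper's own argument: for a Galois extension the sum $\sum_{\sigma\in G}\sigma(\theta_j)$ is precisely the field trace $\Tr_{E/\F}(\theta_j)$, so your Galois-averaging step coincides with the paper's application of $\Tr_{\mathcal K/\F}$ to the relation $\sum_j \ell_j\theta_j=0$, with irreducibility supplying transitivity on roots and hence the normalized trace $\Tr(P_r)/\deg(P_r)$ in both treatments. The only cosmetic difference is that you work in a single splitting field with the explicit group action, while the paper routes the computation through the tower $\F\subset\mathcal L_r\subset\mathcal K$ using transitivity of field traces.
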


\begin{proof}
We verify ii) of Lemma~\ref{thm:kronecker} for $r_1,r_2$ via the field trace a method introduced in~\cite{invol}. For a field extension $\mathcal{K}$ of $\mathcal{F}$, the \emph{field trace} is a linear functional $\Tr_{\mathcal K/\mathcal F}: \mathcal K \rightarrow \mathcal F$  defined for each element  $\alpha \in \mathcal{K}$ as the trace of the $\mathcal{F}$-linear map $x\mapsto \alpha x$. See \cite{lang_book} for details about the field trace.

For each $1\leq r \leq s$ let  $\mathcal{L}_r$ denote the splitting field of $P_r$ over $\F$. Since $P_r$ is irreducible, for any $j \in \Pi_r$ we have 
\[ \Tr_{\mathcal{L}_r / \F}(\theta_j) = \frac{[\mathcal L_r:\mathcal{F}]}{\deg P_j}\sum_{j \in \Pi_r} \theta_j =  \frac{[\mathcal L_r:\mathcal{F}]}{\deg P_j} \Tr(P_r)\]
according to Lemma A.1 of \cite{eisenberg2019pretty}.
Let us suppose for a contradiction that there are integers $\ell_j \in \Z : j=1,\dots, k$ satisfying 
\begin{align}
\sum_{j=1}^k\ell_j \theta_j &= 0 \label{eq:lincomb}\\*
\sum_{j \in \Pi_{r_1}} \ell_j &= 1 \text{ and } \sum_{j \in \Pi_{r_2}} \ell_j = -1\label{eq:pm1}\\
\sum_{j \in \Pi_r} \ell_j &=0 \text{ for all } r \neq r_1, r_2. \label{eq:sum0}
\end{align}

Let $\mathcal K/\mathcal F$ the smallest field extension containing $\mathcal L_1,...,\mathcal L_s$. We apply $\Tr_{\mathcal{K}/\F}$ to \eqref{eq:lincomb}. Then, according to \eqref{eq:pm1},  \eqref{eq:sum0}, and the basic properties of the field trace from Lemma A.1 in~\cite{eisenberg2019pretty},
\begin{align*}
0&=\Tr_{\mathcal K/\mathcal F'}\left( \sum_{j=1}^k \ell_j \theta_j \right) = \sum_{r=1}^s[\mathcal K:\mathcal L_r]\Tr_{\mathcal L_r/F}\left(\sum_{j \in \Pi_r} \ell_j \theta_j \right)\\
&=\sum_{r=1}^s\left([\mathcal K:\mathcal L_r] \sum_{j \in \Pi_r} \ell_j \Tr_{\mathcal L_r/F}\left(\theta_j \right)\right)\\ 
&=\sum_{r=1}^s\left(\frac{[\mathcal K:\mathcal L_r][\mathcal L_r:\mathcal F]}{\deg(P_r)}\Tr(P_r)\sum_{j \in \Pi_r} \ell_j \right)\\
&=[\mathcal K:\mathcal F]\sum_{r=1}^s\frac{\Tr(P_r)}{deg(P_r)}\sum_{j \in \Pi_r} \ell_j 
=[\mathcal K:\mathcal F']\left(\frac{\Tr(P_1)}{deg(P_1)}-\frac{\Tr(P_2)}{deg(P_2)}\right).
\end{align*}
This contradicts $\frac{\Tr(P_1)}{deg(P_1)} \neq \frac{\Tr(P_2)}{deg(P_2)}$ hence ii) of Lemma~\ref{thm:kronecker} holds for $r_1,r_2$, and thus $\mathcal{P}$ is non-degenerate.

\end{proof}

\section{Generalized cospectrality}\label{sec:gen-cospectral}

It is now a well-known fact that perfect state transfer can be characterized by an eigenvector and an eigenvalue condition. The eigenvector condition is called strong cospectrality (see~\cite{godsil_smith_2017}), a strengthening of the classical notion of cospectrality of two nodes. In~\cite{invol, eisenberg2019pretty} cospectrality was used as a starting point to construct examples for pretty good state transfer. 

In~\cite{chan2020fundamentals} the study of fractional revival between two nodes led to a generalization of both cospectrality and strong cospectrality of two nodes to the fractional setting. Further, decomposability was identified as the correct generalization of strong (fractional) cospectrality to arbitrary subsets. However, the analogous extension of the theory of cospectrality was not discussed.

\subsection{$H$-cospectrality}\label{sec:h-cospectral}

Since our goal is to construct examples that admit pretty good fractional revival, in this section we complete the picture by developing the theory of (fractional) cospectrality for arbitrary subsets. It turns out that most features of the classical theory carry over to the general setting. 

Let $K \subset X$ be a subset of indices. For the sake of the applications later on, it turns out to be simpler to work in the more general setting of complex vector spaces and normal matrices. We consider $\C^X$ and $\C^K$ equipped the usual Hermitian scalar product $\sprod{v,w} := v^* w \in \C$. For a vector $v \in \C^X$ (respectively a matrix $A \in \C^{X \times X}$) we denote $\tilde{v} = v_K$ (respectively $\tilde{A} = A_{K\times K}$) its restriction to the subset $K$. Conversely, given a vector $v \in \C^K$ we denote $\hat{v} \in \C^X$ its extension by 0s to the other coordinates of $X$. Note that 
\begin{equation}\label{eq:tilde} \widetilde{A \hat{v}} = \tilde{A} v \mbox{ and } \sprod{A \hat{v}, \hat{w}} = \sprod{\tilde{A} v, w} \end{equation} for any $A \in \C^{X \times X}$ and $v,w \in \C^K$.

Let $M \in \C^{X \times X}$ and $H \in \C^{K\times K}$ be normal matrices with spectral decompositions $M = \sum_{i=1}^d \theta_i E_i$ and $H = \sum_{j=1}^r \rho_j F_j$. The $E_i$ and $F_j$ are self-adjoint projections. 

\begin{definition}\label{def:cospectral}
We say that $K$ is $H$-cospectral in $M$ (or $H$-cospectral, for short) if there is an orthonormal (with respect to the Hermitian scalar product) eigenbasis $\psi_1, \dots, \psi_{|X|}$ such that $\tilde{\psi_j}$ is either 0 or an eigenvector of $H$ for all $j = 1, \dots, \abs{X}$.\end{definition}

\begin{remark}~
\begin{itemize}
\item Clearly, the dependence on $H$ is only via its spectral idempotents $\{F_j\}$. However, it is often more convenient to refer to the matrix $H$ instead of a collection of projectors.
\item This generalizes fractional cospectrality between two nodes (the $\abs{K} =2$ case) introduced in~\cite{chan2020fundamentals} to subsets of arbitrary size. In particular, if \begin{equation}\label{eq:0110} H = \left( \begin{array}{cc} 0 & 1\\ 1 & 0 \end{array} \right)\end{equation} then we recover the classical notion of cospectrality.
\end{itemize}
\end{remark}

\begin{theorem}\label{thm:frac_cosp}
Let $K \subset X, M, H$ as above. The following are equivalent:
\begin{enumerate}
\item\label{it:frco} $K$ is fractionally cospectral with respect to $H$.
\item\label{it:HMk} $H \widetilde{M^k} = \widetilde{M^k} H$ for all $k$.
\item\label{it:HEi} $H \tilde{E_i} = \tilde{E_i} H$ for all $j$.
\item\label{it:FjEi} $F_j \tilde{E_i} = \tilde{E_i}F_j$ for all $i,j$.
\item\label{it:Eivw} For any $v,w$ eigenvectors of $H$ belonging to different eigenvalues, $\sprod{\tilde{E_i} v, w} = 0$ for all $i$.
\item\label{it:basisrk} For each $i$, there is an orthonormal basis of $\im E_i$ that contains exactly $\dim \{ E_i \hat{v} : v \in
\im F_j \}$ vectors that satisfy $0\neq \tilde{\psi} \in \im F_j $ for each $j$, and the rest of the basis elements satisfy $\tilde{\psi} = 0$.
\item\label{it:Mkw} For any $v,w$ eigenvectors of $H$ belonging to different eigenvalues, the subspaces $\sprod{M^k \hat{v} : k=0,1,\dots}$ and $\sprod{M^k \hat{w} : k=0,1,\dots}$ are orthogonal.
\end{enumerate}
\end{theorem}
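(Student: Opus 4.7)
The plan is to prove this seven-way equivalence by splitting the conditions into three clusters and linking them with two bridging implications. The conditions (\ref{it:frco}) and (\ref{it:basisrk}) are tightly related: (\ref{it:basisrk}) implies (\ref{it:frco}) immediately by concatenating the per-$E_i$ bases, while the reverse implication -- including the explicit dimension count -- will emerge later as a byproduct of the chain (\ref{it:frco}) $\Rightarrow$ (\ref{it:FjEi}) $\Rightarrow$ (\ref{it:basisrk}). The ``commutation cluster'' (\ref{it:HMk})--(\ref{it:Eivw}) will be handled by standard spectral theory for normal matrices: since each $E_i$ is a polynomial in $M$ via Lagrange interpolation, $\tilde E_i$ is a linear combination of the matrices $\widetilde{M^k}$, giving (\ref{it:HMk}) $\Leftrightarrow$ (\ref{it:HEi}); and since $H$ is normal with spectral projectors $F_j$, commuting with $\tilde E_i$ is equivalent both to each $F_j$ commuting with $\tilde E_i$ and to $\tilde E_i$ preserving every $H$-eigenspace, yielding (\ref{it:HEi}) $\Leftrightarrow$ (\ref{it:FjEi}) $\Leftrightarrow$ (\ref{it:Eivw}). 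Finally, (\ref{it:Eivw}) $\Leftrightarrow$ (\ref{it:Mkw}) will follow from the spectral identity
\[
\sprod{M^k \hat v, M^\ell \hat w} = \sum_i \overline{\theta_i}^k \theta_i^\ell \sprod{\tilde E_i v, w}
\]
obtained via \eqref{eq:tilde} and orthogonality of the $E_i$, combined with a Vandermonde argument in the distinct eigenvalues of $M$.

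To bridge the two clusters I will prove (\ref{it:frco}) $\Rightarrow$ (\ref{it:FjEi}) and (\ref{it:FjEi}) $\Rightarrow$ (\ref{it:basisrk}). The first is a short direct calculation: an adapted eigenbasis $\{\psi_k\}$ yields $E_i = \sum_{M\psi_k = \theta_i \psi_k} \psi_k \psi_k^*$, hence $\tilde E_i = \sum_k \tilde\psi_k \tilde\psi_k^*$, and each rank-one outer product is built from a vector lying in some $\im F_{j(k)}$, which manifestly commutes with every $F_j$.

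The technical heart of the argument, and the step I expect to be the main obstacle, is (\ref{it:FjEi}) $\Rightarrow$ (\ref{it:basisrk}). My plan is to analyse the restriction map $\phi_i : \im E_i \to \C^K$, $\phi_i(\psi) = \tilde\psi$; by \eqref{eq:tilde} its adjoint is $\phi_i^*(u) = E_i \hat u$, so that $\phi_i \phi_i^* = \tilde E_i$ and $(\ker \phi_i)^\perp = E_i \widehat{\C^K}$. Setting $V_{i,0} = \ker \phi_i$ and $V_{ij} = E_i \widehat{\im F_j}$ for $1 \le j \le r$, the hypothesis (\ref{it:FjEi}) combined with self-adjointness of $\tilde E_i$ and each $F_j$ produces
\[
\sprod{E_i \widehat{F_j u},\, E_i \widehat{F_{j'} u'}} = \sprod{u, F_j \tilde E_i F_{j'} u'} = \sprod{u, \tilde E_i F_j F_{j'} u'} = 0 \qquad (j \neq j'),
\]
so the $V_{ij}$ are pairwise orthogonal, and $V_{ij} \perp V_{i,0}$ because elements of $V_{ij}$ restrict into $\im F_j$ and pair with $V_{i,0}$ via \eqref{eq:tilde} to give zero. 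Since $\C^K = \bigoplus_j \im F_j$, every $E_i \hat u$ splits as $\sum_j E_i \widehat{F_j u} \in \bigoplus_j V_{ij}$, giving the orthogonal decomposition $\im E_i = V_{i,0} \oplus \bigoplus_j V_{ij}$ with $\dim V_{ij} = \dim\{E_i \hat v : v \in \im F_j\}$, exactly as required. Choosing an orthonormal basis of each summand yields (\ref{it:basisrk}), and concatenating over $i$ yields (\ref{it:frco}), closing the cycle.
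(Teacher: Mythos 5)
Your proposal is correct and, despite the reorganized implication graph, follows essentially the same route as the paper: the commutation cluster is handled by the same ``each $E_i$ is a polynomial in $M$, each $F_j$ a polynomial in $H$'' arguments, the equivalence with \ref{it:Mkw} rests on the same identity $\sprod{M^a \hat{v}, M^b \hat{w}} = \sprod{\widetilde{M^{a+b}}\, v, w}$, and the decisive step is the same orthogonal decomposition $\im E_i = \ker\phi_i \oplus \bigoplus_j V_{ij}$ with $V_{ij} = E_i\widehat{\im F_j}$ (the paper derives the pairwise orthogonality of these subspaces from \ref{it:Eivw} rather than directly from \ref{it:FjEi}, a cosmetic difference). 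The one point you leave undischarged, and which condition \ref{it:basisrk} explicitly demands, is that the orthonormal basis vectors you choose inside $V_{ij}$ actually satisfy $0 \neq \tilde{\psi} \in \im F_j$. Both halves are one-line checks you should include: for $\psi = E_i \widehat{F_j u}$, hypothesis \ref{it:FjEi} gives $F_j \tilde{\psi} = F_j \tilde{E_i} F_j u = \tilde{E_i} F_j F_j u = \tilde{\psi}$, so $\tilde{\psi} \in \im F_j$; and $\sprod{\psi,\psi} = \sprod{\widehat{F_j u}, E_i \widehat{F_j u}} = \sprod{F_j u, \tilde{\psi}}$ shows that $\psi \neq 0$ forces $\tilde{\psi} \neq 0$ (this is exactly the computation the paper isolates as Claim~\ref{claim:nonzeroEv}). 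With that inserted, your chain closes.
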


\begin{proof} We prove implications in a cyclic order.
\paragraph{$\ref{it:frco} \implies \ref{it:HMk}$:} Let $\psi_1, \dots, \psi_{\abs{X}}$ as in Definition~\ref{def:cospectral} with corresponding eigenvalues $\lambda_1, \dots , \lambda_{\abs{X}}$.
Then $M^k = \sum \lambda_i^k \psi_i \psi_i^*$ and hence $\widetilde{M^k} = \sum \lambda_i^k \tilde{\psi_i} \tilde{\psi_i}^*$. The $\tilde{\psi_i}$s are all eigenvectors of $H$. If $H v = \rho v$ then $H v v^* = \rho v v^* = v (\bar{\rho v})^* = v (H^* v)^*  = v v^* H$ by the normality of $H$. Thus $H$ commutes with all $\tilde{\psi_i} \tilde{\psi_i}^*$ terms, and in turn also with $\widetilde{M^k}$ for any $k$.

\paragraph{$\ref{it:HMk} \implies \ref{it:HEi}$:} This follows since each $E_i$ is a polynomial of $M$, and thus $\tilde{E_i}$ is a linear combination of the $\widetilde{M^k}$s.

\paragraph{$\ref{it:HEi} \implies \ref{it:FjEi}$:} This follows since each $F_j$ is a polynomial of $H$.

\paragraph{$\ref{it:FjEi} \implies \ref{it:Eivw}$:} Let $v$ be an eigenvector in the $F_j$ eigenspace of $H$. That is, $F_j v = v$. Then $F_j w= 0$ since $w$ is in a different eigenspace. Now, using the self-adjointness of the $F_j$, we get $\sprod{\tilde{E_i} v, w} = \sprod{\tilde{E_i} F_j v, w} = \sprod{F_j \tilde{E_i} v, w} = \sprod{\tilde{E_i} v, F_j w} = 0$ as claimed.

\paragraph{$\ref{it:Eivw} \implies \ref{it:basisrk}$:} Let $E = E_i$ for some fixed $i$. For any $j$ consider the subspace $\mathcal{S}_j \subset \im E$ defined as
\[ \mathcal{S}_j = \{ E \hat{v} : v \in \im F_j \}. \]
If $v \in \im F_{j_1}$ and $w \in \im F_{j_2}$ for some $j_1 \neq j_2$, then $\sprod{E \hat{v}, E\hat{w}} = \sprod{E \hat{v}, \hat{w}} = \sprod{\tilde{E} v, w} = 0$ by the condition. Hence $\mathcal{S}_{j_1} \bot \mathcal{S}_{j_2}$. Thus we can pick an orthonormal basis in each of the $\mathcal{S}_j$s as well as the orthogonal complement of $\oplus \mathcal{S}_j$ in $\im E$. We claim that the union of these bases satisfies our requirements. To show this, consider $u \in \im E$ that is orthogonal to $\mathcal{S}_j$, and take any $v \in \im F_j$. Then
\[ 0 = \sprod{u, E \hat{v}} = \sprod{E u, \hat{v}} = \sprod{u, \hat{v}} = \sprod{\tilde{u},v}\] and thus $\tilde{u}$ is orthogonal to $\im F_j$. This means that if $u \in \mathcal{S}_j$ then it is orthogonal to $\mathcal{S}_{l}$ for all $l \neq j$, hence $\tilde{u}$ is orthogonal to $\im F_l$ for all $l \neq j$. That is only possible if $\tilde{u} \in \im F_j$. If $u = E\hat{v} \neq 0$, then $0 < \sprod{E \hat{v}, E \hat{v}} = \sprod{\hat{v},E\hat{v}} = \sprod{v, \tilde{u}}$, thus $\tilde{u} \neq 0$ as claimed. Lastly if $u \in \im E$ is orthogonal to all $\mathcal{S}_j$s, then by the same argument $\tilde{u}$ is orthogonal to all $\im F_j$s, hence $\tilde{u} = 0$ in this case.

\paragraph{$\ref{it:basisrk} \implies \ref{it:frco}$:} The union for all $i$ of the bases defined in \ref{it:basisrk} obviously satisfies Definition~\ref{def:cospectral}.

\paragraph{$\ref{it:Eivw} \Longleftrightarrow \ref{it:Mkw}$:} This follows since $\tilde{E_i}$ is a linear combination of the $\widetilde{M^k}$s as we have seen before, and vice versa: $\widetilde{M^k}$ is obviously a linear combination of the $\tilde{E_i}$s. Finally $\sprod{M^a \hat{v}, M^b \hat{w}} = \sprod{M^{a+b} \hat{v}, \hat{w}} = \sprod{\widetilde{M^{a+b}}v, w}$, so the latter is 0 for all $a,b$ if and only if the two subspaces are orthogonal. 
\end{proof}

The following observation will be useful later, but we state it here since the computation is essentially the same as in the proof of $\ref{it:Eivw} \implies \ref{it:basisrk}$ above.

\begin{claim}\label{claim:nonzeroEv}
Let $v \in \im F_j$. Then 
\[ \sprod{E_i \hat{v}, E_i \hat{v}} = \sprod{\hat{v}, E_i \hat{v}} = \sprod{v, \widetilde{E_i \hat{v}}} = \sprod{ v, \tilde{E_i} v},\]
hence  $E_i \hat{v} \neq 0$ if and only if its restriction to $K$ is non-zero. And in this case the restriction, $\tilde{E_i} v$, contains a component parallel to $v$.
\end{claim}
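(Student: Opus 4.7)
The plan is to verify the chain of equalities by a short direct computation and then read off the two assertions as consequences. For the first equality, I would use that $E_i$ is a self-adjoint idempotent (being a spectral projector of the normal matrix $M$), so
\[ \sprod{E_i \hat{v}, E_i \hat{v}} = \sprod{\hat{v}, E_i^* E_i \hat{v}} = \sprod{\hat{v}, E_i \hat{v}}. \]
For the second equality, since $\hat{v}$ vanishes outside $K$, the Hermitian inner product $\sprod{\hat{v}, E_i \hat{v}}$ only picks up the $K$-coordinates of the second factor, yielding $\sprod{v, \widetilde{E_i \hat{v}}}$. The third equality is immediate from \eqref{eq:tilde} applied to $A = E_i$, which gives $\widetilde{E_i \hat{v}} = \tilde{E_i} v$.

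With the chain in hand, the remaining claims fall out. The outer equality reads $\sprod{E_i \hat{v}, E_i \hat{v}} = \sprod{v, \tilde{E_i} v}$, so $E_i \hat{v} = 0$ precisely when $\sprod{v, \tilde{E_i} v} = 0$. This inner product clearly vanishes whenever $\tilde{E_i} v = 0$, and conversely if $E_i \hat{v} = 0$ then $\tilde{E_i} v = \widetilde{E_i \hat{v}} = 0$ by \eqref{eq:tilde}. Together these give $E_i \hat{v} \neq 0$ if and only if $\tilde{E_i} v \neq 0$. Finally, assuming $E_i \hat{v} \neq 0$, we have $\sprod{v, \tilde{E_i} v} > 0$, so the orthogonal projection of $\tilde{E_i} v$ onto the line spanned by $v$ is nonzero, i.e.\ $\tilde{E_i} v$ has a nonzero component parallel to $v$.

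Since the entire argument is a direct unfolding of definitions (and essentially the same manipulation already performed inside the proof of $\ref{it:Eivw} \implies \ref{it:basisrk}$ in Theorem~\ref{thm:frac_cosp}), I do not anticipate any serious obstacle. The only point requiring even minor care is the order of arguments in the conjugate-linear-in-the-first-slot Hermitian inner product when invoking \eqref{eq:tilde}; everything else is mechanical.
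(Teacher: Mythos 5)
Your proposal is correct and matches the paper's argument: the paper justifies this claim by exactly the same chain of manipulations (self-adjoint idempotency of $E_i$, restriction of the inner product to $K$ via \eqref{eq:tilde}, and positivity of $\sprod{v,\tilde{E_i}v}$ when $E_i\hat{v}\neq 0$), noting it is the same computation as in the proof of $\ref{it:Eivw}\implies\ref{it:basisrk}$. No issues.
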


\begin{remark}\label{rem:equiv_def}
In the case when $|K| = 2$ and $H$ is a 2-by-2 matrix with eigenvectors $(p,q)$ and $(-q, p)$, our definition of $K$-cospectral coincides with what is called a fractionally cospectral pair of nodes in~\cite[Theorem 3.3, condition (ii)]{chan2020fundamentals}, hence this is indeed a direct generalization of that notion. 
\end{remark}

\subsection{A factorization of $\phi(M,t)$}

\begin{definition}
Let $K$ be $H$-cospectral in $M$. Define $b_{i,j} = \dim \{ E_i \hat{v} : v \in
\im F_j \}$ for any $1 \leq j \leq r$, and $b_{i,0} = \dim \{u \in \im E_i : \tilde{u} = 0\}$.  Let $P_j(t) = \prod_{i=1}^d (t-\theta_i)^{b_{i,j}} : j = 0,\dots, r$. 
\end{definition}

The following is immediate from \ref{it:basisrk} of Theorem~\ref{thm:frac_cosp}:
\begin{corollary}\label{cor:phiMfactor}
Let $K$ be $H$-cospectral in $M$. Then the characteristic polynomial $\phi(M) = \phi(M,t)$ can be decomposed as 
\[ \phi(M,t) = \prod_{j=0}^r P_j(t). \]
\end{corollary}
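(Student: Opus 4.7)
The plan is to read off the factorization directly from the orthonormal basis produced by item \ref{it:basisrk} of Theorem~\ref{thm:frac_cosp}. Since $M$ is normal, its characteristic polynomial is $\phi(M,t) = \prod_{i=1}^d (t-\theta_i)^{\dim \im E_i}$, so everything reduces to showing that for each $i$
\[ \dim \im E_i = \sum_{j=0}^r b_{i,j}. \]

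To get this, I would invoke \ref{it:basisrk} of Theorem~\ref{thm:frac_cosp}: for each $i$, there is an orthonormal basis of $\im E_i$ consisting of
\begin{itemize}
\item exactly $b_{i,j} = \dim\{E_i\hat v : v \in \im F_j\}$ vectors $\psi$ satisfying $0 \neq \tilde\psi \in \im F_j$, for each $1 \leq j \leq r$, together with
\item the remaining basis vectors, which by construction satisfy $\tilde\psi = 0$ and therefore span the subspace $\{u \in \im E_i : \tilde u = 0\}$ of dimension $b_{i,0}$.
\end{itemize}
These disjoint groups together form a basis of $\im E_i$, so their sizes add to $\dim \im E_i$, giving the identity above. (The dimension count for the $b_{i,0}$ piece is consistent: the subspaces $\Sl_j$ are pairwise orthogonal inside $\im E_i$ by the proof of $\ref{it:Eivw}\Rightarrow\ref{it:basisrk}$, and their orthogonal complement is precisely $\{u \in \im E_i : \tilde u = 0\}$, as shown there.)

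Substituting back,
\[ \phi(M,t) = \prod_{i=1}^d (t-\theta_i)^{\sum_{j=0}^r b_{i,j}} = \prod_{j=0}^r \prod_{i=1}^d (t-\theta_i)^{b_{i,j}} = \prod_{j=0}^r P_j(t), \]
which is the claimed factorization. There is essentially no obstacle once Theorem~\ref{thm:frac_cosp}\ref{it:basisrk} is in hand; the only thing that needs a moment of care is confirming that the $b_{i,0}$ term accounts for exactly the orthogonal complement of $\bigoplus_j \Sl_j$ in $\im E_i$, and that is already contained in the proof of the implication $\ref{it:Eivw}\Rightarrow\ref{it:basisrk}$.
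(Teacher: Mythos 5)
Your proof is correct and is exactly the argument the paper has in mind: the paper states the corollary is ``immediate from'' item \ref{it:basisrk} of Theorem~\ref{thm:frac_cosp}, and your write-up simply makes explicit the dimension count $\dim \im E_i = \sum_{j=0}^r b_{i,j}$ (including the check that the leftover basis vectors with $\tilde\psi=0$ account for exactly $b_{i,0}$) that justifies that word. No gaps.
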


$H$-cospectrality becomes the most useful when $H$ has $|K|$ distinct eigenvalues. That is, when all $F_j$s have rank 1. Let us fix an eigenvector $v_j \in \im F_j$ of $H$ for each $j$.

\begin{lemma}\label{lem:K_distinct} Assume $H$ has $K$ distinct eigenvalues. Then
\begin{enumerate} 
\item
$P_j(t) = p_{v_j}(t)$ for all $1 \leq j \leq r = |K|$, where $p_{v_j}$ is the minimal polynomial of $M$ relative to $v_j$, that is, the smallest degree monic polynomial $p$ such that $p(M)\hat{v_j} = 0$, or, equivalently, the characteristic polynomial of the action of $M$ restricted to the space $\langle v_j, M v_j, M^2 v_j, \dots, \rangle$.  

\item $(\theta_a,\theta_b) \in \Phi_K$, if and only if there is a $j$ such that $\theta_a, \theta_b$ are both roots of $P_j(t)$. 
\end{enumerate}
\end{lemma}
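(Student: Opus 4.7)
I would tackle the two parts separately, using the spectral expansion $\hat{v_j} = \sum_i E_i\hat{v_j}$ as the common thread and exploiting the fact that $F_j$ being rank one collapses the definition of $b_{i,j}$.

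For part 1, since $\im F_j = \langle v_j\rangle$ the space $\{E_i\hat{v} : v\in\im F_j\}$ is spanned by the single vector $E_i\hat{v_j}$, so $b_{i,j}\in\{0,1\}$ with $b_{i,j}=1$ precisely when $E_i\hat{v_j}\neq 0$; hence
\[ P_j(t) \;=\; \prod_{i:\, E_i\hat{v_j}\neq 0}(t-\theta_i). \]
To identify this with the minimal polynomial of $M$ relative to $\hat{v_j}$, I would observe that $p(M)\hat{v_j} = \sum_i p(\theta_i)\,E_i\hat{v_j}$, and that the non-zero summands lie in distinct $M$-eigenspaces, hence are linearly independent. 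So $p$ annihilates $\hat{v_j}$ if and only if $p(\theta_i)=0$ for every $i$ with $E_i\hat{v_j}\neq 0$; the monic polynomial of smallest degree with this property is exactly $P_j(t)$. The same description simultaneously identifies $P_j(t)$ as the characteristic polynomial of $M$ restricted to the cyclic subspace $\langle M^k\hat{v_j} : k\geq 0\rangle$.

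For part 2, because $H$ is normal with $|K|$ distinct eigenvalues, the vectors $\{v_1,\dots,v_{|K|}\}$ form an orthonormal basis of $\C^K$, which gives the operator identity $D_K = \sum_j \hat{v_j}\hat{v_j}^*$ on $\C^X$. Plugging this in yields
\[ E_aD_KE_b \;=\; \sum_{j=1}^{|K|} (E_a\hat{v_j})(E_b\hat{v_j})^*. \]
Next, item \ref{it:FjEi} of Theorem~\ref{thm:frac_cosp} says that the rank-one projection $F_j = v_jv_j^*$ commutes with each $\tilde{E_i}$, which is equivalent to each $v_j$ being an eigenvector of $\tilde{E_i}$ (with non-negative real eigenvalue $\|E_i\hat{v_j}\|^2$, by Claim~\ref{claim:nonzeroEv}). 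A short calculation using this --- $\sprod{E_i\hat{v_j}, E_i\hat{v_k}} = \sprod{\hat{v_j}, E_i\hat{v_k}} = \sprod{v_j, \tilde{E_i}v_k}$ and $\tilde{E_i}v_k$ is a multiple of $v_k$ --- shows that for each fixed $i$ the vectors $\{E_i\hat{v_j}\}_j$ are mutually orthogonal.

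The remaining step, which I expect to be the most delicate, is showing that the orthogonal rank-one terms in $E_aD_KE_b = \sum_j u_jw_j^*$ (with $u_j = E_a\hat{v_j}$, $w_j = E_b\hat{v_j}$) cannot cancel. I would handle this by testing the hypothetical relation $\sum_j u_jw_j^* = 0$ against $w_k$: orthogonality of the $w_j$ gives $0 = \sum_j \sprod{w_j,w_k}u_j = \|w_k\|^2 u_k$, forcing every summand individually to vanish. Consequently $E_aD_KE_b\neq 0$ iff there exists $j$ with both $E_a\hat{v_j}\neq 0$ and $E_b\hat{v_j}\neq 0$; by part 1 this is precisely the statement that $\theta_a$ and $\theta_b$ are both roots of the same $P_j$, completing the equivalence.
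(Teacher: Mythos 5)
Your proof is correct. Part 1 is essentially the paper's own argument: both reduce to the observation that $b_{i,j}\in\{0,1\}$ with $b_{i,j}=1$ iff $E_i\hat{v_j}\neq 0$, and that the relative minimal polynomial has exactly these $\theta_i$ as (simple) roots — you merely spell out the ``well-known'' fact that the paper cites. Part 2 is where you take a genuinely different route. The paper applies $E_aD_KE_b$ to an arbitrary vector $w$, writes $\widetilde{D_K}=\sum_j F_j$, and then needs the appendix Lemma~\ref{lem:imE} (that $\widetilde{E_b w}$ can always be rewritten as $\tilde{E_b}z$ for some $z$ supported on $K$) in order to push the commutation $F_j\tilde{E_b}=\tilde{E_b}F_j$ through and deduce $E_b\hat{v_j}\neq 0$; the converse direction is then a separate computation via Claim~\ref{claim:nonzeroEv}. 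You instead normalize the $v_j$ (harmless, since $H$ is normal with simple spectrum, so its eigenvectors are automatically orthogonal and the roots of $P_j$ are scale-invariant), write $D_K=\sum_j\hat{v_j}\hat{v_j}^{\,*}$, and obtain the operator identity $E_aD_KE_b=\sum_j(E_a\hat{v_j})(E_b\hat{v_j})^*$; cancellation among the rank-one terms is excluded by the mutual orthogonality of $\{E_b\hat{v_j}\}_j$, which is exactly condition~\ref{it:Eivw} of Theorem~\ref{thm:frac_cosp} applied to the eigenvectors $v_j$. This delivers both implications of the equivalence in one stroke and bypasses Lemma~\ref{lem:imE} entirely, at the modest cost of fixing an orthonormal choice of the $v_j$, which the paper's version does not require. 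Both arguments are complete.
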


\begin{proof} It is well-known that the relative minimal polynomial has only simple roots, and $p_{v_j}(\theta_i) = 0$ if and only if $E_i \hat{v_j} \neq 0$. Since $\dim\im F_j = 1$, the integers $b_{i,j}$ can only be 0 or 1, so $P_j$ has only simple roots. And $\theta_i$ is a root of $P_j(t)$ if and only if $E_i \hat{v_j} \neq 0$. Thus $p_{v_j}$ and $P_j$ have exactly the same roots, and they are both monic, so they are equal. 

To prove the 2nd part, note that $(\theta_a,\theta_b) \in \Phi_K$ if and only if $E_a D_K E_b \neq 0$, according to Definition~\ref{def:partition}. Here 
\[E_a D_K E_b w = E_a \widehat{\widetilde{D_K} \widetilde{E_b w}} = \sum_j E_a \widehat{F_j \widetilde{E_b w}}.\] Thus if $E_a D_K E_b w \neq 0$ then $E_a \widehat{F_j \widetilde{E_b w}} \neq 0$ for some $j$. Since $\im F_j$ is 1-dimensional, this implies both that $E_a \hat{v_j} \neq 0$ and that $F_j \widetilde{E_b w} \neq 0$. By Lemma~\ref{lem:imE} we see that there exists a vector $z \in \R^K$ such that $\widetilde{E_b w} = \widetilde{E_b  \hat{z}} = \tilde{E_b} z$, and so $0 \neq F_j \tilde{E_b} z= \tilde{E_b} F_j z$, which implies $\tilde{E_b} v_j \neq 0$ and hence $E_b \hat{v_j} \neq 0$. We have already established $E_a \hat{v_j} \neq 0$, and so we find that both $\theta_a$ and $\theta_b$ are roots of $P_j(t)$.

To see the converse direction, first note that
Claim~\ref{claim:nonzeroEv} implies $E_i \hat{v_j} \neq 0$  if and only if $\tilde{E_i}v_j = \widetilde{E_i \hat{v_j}}$ contains a component parallel to $v_j$. But since $\tilde{E_i} v_j = \tilde{E_i} F_j v_j = F_j \tilde{E_i}v_j \in \im F_j$ the latter of which is 1-dimensional, we find that $E_i \hat{v_j} \neq 0$ if and only if $v_j$ is an eigenvector of $\widetilde{E_i}$. If $\theta_a$ and $\theta_b$ are both roots of $P_j(t)$ then $E_a \hat{v_j}$ and $E_b \hat{v_j}$ are both non-zero. Hence $D_K E_b \hat{v_j} = \widehat{ \tilde{E_b} v_j}$ is a non-zero constant multiple of $\hat{v_j}$, and then $E_a D_K E_b \hat{v_j}$ is also non-zero, implying that $(\theta_a, \theta_b) \in \Phi_K$
\end{proof}

\begin{corollary}
If $K$ is $H$-cospectral in $M$ for some $H$ that has no multiple eigenvalues, then $\PK$ is the most refined partition where for each $j$ the roots of $P_
j(t)$ fall in the same part.
\end{corollary}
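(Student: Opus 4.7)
The corollary is essentially a bookkeeping consequence of Lemma~\ref{lem:K_distinct}. The partition $\PK = (\Pi_0,\Pi_1,\dots,\Pi_s)$ is constructed in two stages---first isolating the null part $\Pi_0 = \{i : \tilde E_i = 0\}$, then taking the transitive closure of $\Phi_K$ on the complement---and each stage can be read off from the polynomials $P_1,\dots,P_r$, so the proof amounts to matching these two pictures up.

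First I would identify $\Pi_0$ purely in terms of the polynomials, showing that $i \in \Pi_0$ iff $\theta_i$ is a root of no $P_j$ with $j \geq 1$. Since $\im F_j$ is one-dimensional, $b_{i,j} \in \{0,1\}$ and $b_{i,j}=0$ iff $E_i \hat{v_j}=0$. As $\{v_j\}$ is a basis of $\C^K$, the vanishing of $E_i \hat{v_j}$ for every $j$ is equivalent to $E_i \hat v=0$ for all $v \in \C^K$. The self-adjointness of $E_i$ together with~\eqref{eq:tilde} yields $\znorm{E_i \hat v}^2 = \sprod{\hat v, E_i \hat v} = \sprod{v, \tilde E_i v}$, so this last condition holds iff $\tilde E_i = 0$, i.e.\ $i \in \Pi_0$.

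Second, on the complement $\{1,\dots,d\} \setminus \Pi_0$, part (2) of Lemma~\ref{lem:K_distinct} states directly that $(\theta_a,\theta_b) \in \Phi_K$ iff $\theta_a$ and $\theta_b$ are both roots of a common $P_j$ (necessarily with $j \ge 1$, since roots shared with only $P_0$ would force $i \in \Pi_0$). Therefore the transitive closure of $\Phi_K$ agrees with the equivalence relation ``share a common $P_j$''. The equivalence classes of the latter are, by definition, the most refined partition in which the root set of each $P_j$ lies entirely within one part, and these classes are $\Pi_1,\dots,\Pi_s$, completing the identification of $\PK$.

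There is no substantive technical obstacle; the whole argument is a translation of Lemma~\ref{lem:K_distinct} into the language of partitions. The only point that requires care is the role of $P_0$: it may pick up roots $\theta_i$ outside $\Pi_0$ (coming from eigenvectors of $M$ that happen to vanish on $K$ while $\tilde E_i \neq 0$), and for this reason it is the polynomials $P_1,\dots,P_r$, rather than the full family $P_0,\dots,P_r$, that generate the constraints defining the partition.
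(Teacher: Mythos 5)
Your proposal is correct and follows the same route the paper intends: the corollary is stated as an immediate consequence of Lemma~\ref{lem:K_distinct}, and your argument is exactly that deduction, with the additional (welcome) detail of identifying $\Pi_0$ as the set of $i$ for which $\theta_i$ is a root of no $P_j$ with $j\geq 1$ via Claim~\ref{claim:nonzeroEv}. Nothing further is needed.
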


\subsection{Gluing and Diagonal perturbation}

\begin{theorem}\label{thm:gluing}
Let $X = X_1 \cup X_2$ a splitting of the index set $X$ such that $K = X_1 \cap X_2$. Let $M_1$ and $M_2$ be two matrices supported on $X_1$ and $X_2$ respectively. Suppose that $K$ is $H$-cospectral in both $M_1$ and $M_2$ for some $H \in \C^{K \times K}$. Then $K$ is also $H$-cospectral in $M = M_1 + M_2$.
\end{theorem}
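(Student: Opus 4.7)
The plan is to verify condition \ref{it:HMk} of Theorem~\ref{thm:frac_cosp} for $M = M_1 + M_2$, using that it already holds for $M_1$ and for $M_2$. That is, I need to show $H$ commutes with $\widetilde{M^k}$ for every $k \geq 0$.

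The heart of the argument is a combinatorial identity for restricted products: if $N_1, N_2, \dots, N_\ell$ is a sequence in which each $N_i$ is a positive integer power of $M_1$ or of $M_2$, and consecutive entries are of opposite types, then
\[ \widetilde{N_1 N_2 \cdots N_\ell} = \widetilde{N_1} \cdot \widetilde{N_2} \cdots \widetilde{N_\ell}. \]
To prove this, I would write the $(i,j)$ entry of the left side for $i,j \in K$ as a sum over intermediate indices $c_1, \dots, c_{\ell-1}$. Each $c_m$ separates an $M_1$-block from an $M_2$-block (or vice versa), so the support conditions force $c_m \in X_1 \cap X_2 = K$, and the multiple sum collapses to one over $K^{\ell-1}$, which is exactly the entry of the right side. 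Crucially, within a maximal run of the same matrix one generally has $\widetilde{M_i^a} \neq (\widetilde{M_i})^a$, so the full restricted powers must remain atomic factors.

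Once the identity is established, expanding $M^k = (M_1+M_2)^k$ and grouping each resulting monomial into its maximal runs of $M_1$'s and $M_2$'s presents $\widetilde{M^k}$ as a sum of terms of the form $\widetilde{N_1}\widetilde{N_2}\cdots\widetilde{N_\ell}$. The hypothesis together with condition \ref{it:HMk} implies that $H$ commutes with $\widetilde{M_1^a}$ and $\widetilde{M_2^b}$ for all $a,b \geq 0$, so $H$ commutes with each factor, hence with each product, and hence with $\widetilde{M^k}$. Invoking condition \ref{it:HMk} in the reverse direction then yields that $K$ is $H$-cospectral in $M$.

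The main technical obstacle is the combinatorial identity above: one needs to confirm that the support restrictions truly force every intermediate index into $K$, and resist the tempting but false simplification $\widetilde{M_i^a} = (\widetilde{M_i})^a$. After that, the rest of the proof is a formal assembly via linearity and the hypothesis.
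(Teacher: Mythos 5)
Your proposal is correct and follows essentially the same route as the paper: both reduce to condition \ref{it:HMk} of Theorem~\ref{thm:frac_cosp}, expand $(M_1+M_2)^k$ into alternating products, and use that every intermediate index in such a product is forced into $X_1\cap X_2=K$ so that the restriction factors as $\widetilde{N_1}\cdots\widetilde{N_\ell}$. Your explicit warning that one needs commutation with $\widetilde{M_i^a}$ (not merely with $\widetilde{M_i}$, since $\widetilde{M_i^a}\neq(\widetilde{M_i})^a$ in general) is a point the paper's own write-up glosses over, and it is handled correctly by invoking the hypothesis through condition \ref{it:HMk}.
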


\begin{proof} By Theorem~\ref{thm:frac_cosp} it suffices to show that $H$ commutes with $\widetilde{M^k}$ for all $k$. Note that $M^k$ can be expressed as
\[
M^k = \sum_{\sum(a_i+b_i)=k} M_1^{a_1}M_2^{b_2} M_1^{a_2} M_2^{b_2}\cdots
\]
Let $\Pi \in \R^{K \times X}$ denote the projection onto $K$, that is $\Pi$ has 1 on its diagonal in $S$. Then $\tilde{N} = \Pi N \Pi^{\tr}$ for any $X\times X$ matrix $N$. Further,  examining the matrix multiplication one can see that for any matrices $N_i$ supported on $X_i$ for  $i = 1,2$, on has 
\[
N_1 N_2 \Pi^{\tr} = N_1 \Pi^{\tr} \Pi N_2 \Pi^{\tr} \mbox{ and } N_2 N_1 \Pi^{\tr} = N_2 \Pi^{\tr} \Pi N_1 \Pi^{\tr} 
\]
Thus
\[
\widetilde{M^k} = \Pi M^k\Pi^{\tr} = \sum_{\sum(a_i+b_i)=k} \Pi M_1^{a_1}\Pi^{\tr} \Pi  M_2^{b_2}\Pi^{\tr} \Pi M_1^{a_2}\Pi^{\tr} \Pi  M_2^{b_2}\Pi^{\tr} \cdots
=  \sum_{\sum(a_i+b_i)=k} \widetilde{M_1^{a_1}}\widetilde{ M_2^{b_2}}\widetilde{M_1^{a_2}}\widetilde{M_2^{b_2}}\cdots.
\]
By assumption, $H$ commutes with both $\widetilde{M_1}$ and $\widetilde{M_2}$, so it also commutes with $\widetilde{M^k}$.
\end{proof}

In the next section we will study a diagonal perturbation of $M$ in the form of $M + Q D_K$. Using the previous theorem with $X_1 = X, X_2 = K, M_1 = M, M_2 = D_K$ and noting that $K$ is $H$-cospectral in $D_K$ for any $H$, we get the following.

\begin{corollary}\label{cor:potential}
If $K$ is $H$-cospectral in $M$ then it is also $H$-cospectral in $M + Q D_K$ for any $Q \in \R$.
\end{corollary}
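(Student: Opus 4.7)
The plan is to apply the gluing theorem (Theorem~\ref{thm:gluing}) with the almost-trivial splitting $X_1 = X$, $X_2 = K$ and matrices $M_1 = M$, $M_2 = Q D_K$. This choice respects the hypotheses of the gluing theorem: $X_1 \cup X_2 = X$, $X_1 \cap X_2 = K$, $M_1$ is supported on $X_1$, $M_2$ is supported on $X_2$, and $M_1 + M_2 = M + Q D_K$. So provided I can show that $K$ is $H$-cospectral in each of $M_1$ and $M_2$ separately, Theorem~\ref{thm:gluing} will deliver the conclusion immediately.

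The first input is the hypothesis of the corollary. For the second, I would invoke characterization~\ref{it:HMk} of Theorem~\ref{thm:frac_cosp}: it suffices to verify that $H$ commutes with $\widetilde{(Q D_K)^k}$ for every $k \geq 0$. Since $D_K$ is idempotent, $(Q D_K)^k = Q^k D_K$ for $k \geq 1$, so $\widetilde{(Q D_K)^k} = Q^k \Id_{K\times K}$ (and $\Id_{K\times K}$ when $k=0$). Any scalar multiple of $\Id_{K\times K}$ commutes with every $K \times K$ matrix, so in particular with $H$. Hence $K$ is $H$-cospectral in $Q D_K$ for every normal $H$.

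There is essentially no obstacle to overcome in this corollary: all of the substantive work has already been packed into the gluing theorem together with the equivalence~\ref{it:HMk}, and the only remaining check is the trivial fact that $H$ commutes with the identity on $K$. Feeding the two ingredients into Theorem~\ref{thm:gluing} therefore completes the proof.
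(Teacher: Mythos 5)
Your proposal is correct and follows exactly the paper's own route: the paper also derives this corollary by applying Theorem~\ref{thm:gluing} with $X_1 = X$, $X_2 = K$, $M_1 = M$, $M_2 = D_K$, noting that $K$ is $H$-cospectral in $D_K$ for any $H$. Your verification that $H$ commutes with $\widetilde{(Q D_K)^k} = Q^k \Id_{K\times K}$ simply spells out the detail the paper leaves implicit.
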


\subsection{Diagonal perturbation}

Let $H$ be a normal matrix in $\C^{K \times K}$ that has $|K|$ distinct eigenvalues and an orthonormal eigenbasis $v_1, v_2, \dots, v_{|K|}$. In this section we assume that $K$ is $H$-cospectral in $M$. Let $\F \geq \Q$ denote the smallest number field containing all entries of  $M$ and all roots of $\phi(H)$. Then $v_1,\dots, v_{|K|} \in \F^{|K|}$. 

Let $Q \in \R$ denote a transcendental number that is algebraically independent of $\F$, and consider $M^K = M + Q \cdot D_K \in \R[Q]^{X \times X}$. Then, according to Corollary~\ref{cor:potential}, $K$ is also $H$-cospectral in $M^K$. In particular, according to Corollary~\ref{cor:phiMfactor} and Lemma~\ref{lem:K_distinct}, we have the factorization
\begin{equation}\label{eq:mk_factor} \phi(M^K) = P_0(t) \cdot \prod_{j=1}^{|K|} P_j(t) \end{equation} where $P_j$ is the minimal polynomial of $M^K$ relative to $\hat{v}_j$ for $j=1,\dots, |K|$. 

\begin{claim}\label{clm:deg_tr}
$P_0 \in \F[t]$ and $P_j \in \F[Q,t]$ for $j = 1,\dots,|K|$. Furthermore, the $Q$-degree of $P_j$ is 1  and $\Tr P_j - Q \in \F$ for each $j=1,\dots,|K|$
\end{claim}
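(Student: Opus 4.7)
The plan is to establish the three assertions---$P_j \in \F[Q,t]$, $P_0 \in \F[t]$, and each $P_j$ ($j\geq 1$) having $Q$-degree $1$ with $\Tr P_j - Q \in \F$---in turn, using three tools: Gauss's lemma (since $\F[Q]$ is a UFD), the transcendence of $Q$ over $\F$, and a direct analysis of the top $Q$-coefficient extracted from the identity $P_j(M^K)\hat v_j = 0$. Write $d_j = \deg P_j$ throughout.

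First I would observe that each Krylov vector $(M^K)^k \hat v_j$ lies in $\F[Q]^X$, so $P_j \in \F(Q)[t]$; and since $P_j$ is $t$-monic and divides $\phi(M^K) \in \F[Q][t]$ within $\F(Q)[t]$, Gauss's lemma places $P_j$ in $\F[Q][t]$. The same argument gives $P_0 \in \F[Q][t]$. To upgrade this to $P_0 \in \F[t]$, I would note that if $u \in \im E_i$ contributes to $P_0$, i.e.\ $\tilde u = 0$, then $Mu = M^K u - Q D_K u = \theta_i u$, so $\theta_i$ is an eigenvalue of $M$ and hence algebraic over $\F$. This places $P_0 \in \overline{\F}[t]$, and combined with $P_0 \in \F[Q][t]$ and the transcendence of $Q$ (which gives $\F[Q] \cap \overline{\F} = \F$), I conclude $P_0 \in \F[t]$.

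Next I would pin down the $Q$-degrees. Expanding $\det(tI - M - QD_K)$ multilinearly in the $|K|$ columns indexed by $K$ shows that the leading $Q^{|K|}$-coefficient equals $(-1)^{|K|}$ times the characteristic polynomial of $M$ restricted to indices outside $K$, which is a nonzero polynomial in $t$; so $\phi(M^K)$ has $Q$-degree exactly $|K|$. To rule out $Q$-degree $0$ for $P_j$, I would exploit the fact that $(M+QD_K)^k$ has top $Q^k$-coefficient $D_K^k = D_K$: if $P_j \in \F[t]$ were $t$-monic of degree $d_j$, the $Q^{d_j}$-coefficient of $P_j(M^K)\hat v_j$ would equal $D_K \hat v_j = \hat v_j \neq 0$, contradicting $P_j(M^K)\hat v_j = 0$. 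Since $Q$-degree is additive under products (the leading $Q$-coefficients lie in the integral domain $\F[t]$), summing over the $|K|$ factors $P_1,\dots,P_{|K|}$ forces each $P_j$ to have $Q$-degree exactly $1$.

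For the trace, I would write $P_j = f_j(t) + Q g_j(t)$ with $f_j$ $t$-monic of degree $d_j$ and $\deg g_j \leq d_j - 1$, and again read off the coefficient of $Q^{d_j}$ in $P_j(M^K)\hat v_j = 0$. This coefficient equals $(1+c)\hat v_j$, where $c = [t^{d_j - 1}] g_j$; vanishing forces $c = -1$, i.e.\ $g_j$ has degree $d_j - 1$ with leading coefficient $-1$. Then $\Tr P_j = -[t^{d_j - 1}](f_j + Q g_j) = Q - [t^{d_j - 1}] f_j$, so $\Tr P_j - Q \in \F$. I expect the main conceptual obstacle is not computational but rather finding the right viewpoint: once one regards $P_j(M^K)\hat v_j = 0$ as a polynomial identity in $Q$ whose top $Q$-coefficient must vanish, the entire claim reduces to elementary bookkeeping with $D_K^2 = D_K$ and the transcendence of $Q$.
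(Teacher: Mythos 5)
Your proposal is correct and follows essentially the same route as the paper: Gauss's lemma over the UFD $\F[Q]$ to get integrality, the observation that $\deg_Q \phi(M^K) = |K|$ combined with a lower bound of $1$ on each $\deg_Q P_j$ to pin down the degrees, and an analysis of the top $Q$-coefficient of $P_j(M^K)\hat v_j = 0$ to extract $\Tr P_j - Q \in \F$. The only difference is that you carry out explicitly the $Q^{d_j}$-coefficient bookkeeping (and add a redundant but valid algebraicity argument for $P_0$), whereas the paper outsources that step to Lemma~3.3 of~\cite{eisenberg2019pretty}.
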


\begin{proof}
Since $P_j$, for $j \geq 1$, is a relative minimal polynomial of a matrix in $\F(Q)^{X \times X}$ relative to a vector with entries in $\F$, it is automatic that $P_j \in \F(Q)[t]$. Then \eqref{eq:mk_factor} implies the same for $P_0$. However, $\F(Q)$ is the quotient field of the ring $\F[Q]$ which is a UFD. Hence by Gauss's Lemma the factorization \eqref{eq:mk_factor} is valid in $\F[Q,t]$.

The $Q$-degree of $\phi(M^K)$ is obviously $|K|$. It is easy to see that each $P_j : j \geq 1$ is at least linear in $Q$ and, in fact, satisfies $\Tr P_j - Q \in \F$. (This follows from a short analysis of the $Q^s$-terms in the linear combination of $(M^K)^s \hat{v}_j$ vectors that define the coefficients of $P_j$. See~\cite[Lemma 3.3]{eisenberg2019pretty} for a detailed proof.) This is only possible if the $Q$-degree of $P_0$ is 0, and the $Q$-degree of the other $P_j$s are exactly 1.
\end{proof}

\begin{theorem}\label{thm:irreducible}
$P_j$ is irreducible in $\F[Q,t]$ (and hence also in $\F(Q)[t]$) for all $j \geq 1$.
\end{theorem}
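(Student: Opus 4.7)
The plan is to argue by contradiction: force any hypothetical factorization into a very specific shape using $\deg_Q P_j = 1$, then specialize $Q$ to a real transcendental number to apply real-symmetric spectral theory.

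Suppose for contradiction that $P_j$ factors non-trivially in $\F[Q,t]$. Since $P_j$ is monic in $t$ and by Claim~\ref{clm:deg_tr} has $Q$-degree exactly $1$, any factorization is forced into the form $P_j(Q,t) = A(t)\,B(Q,t)$ with $A \in \F[t]$ monic of positive degree and $B \in \F[Q,t]$ monic in $t$ with $\deg_Q B = 1$. Let $\mu \in \overline{\F}$ be a root of $A$. Then $P_j(Q,\mu) \equiv 0$ in $\F[Q]$, so $\mu$ is an eigenvalue of $M + qD_K$ for every specialization $Q=q$; since $M + qD_K$ is real symmetric for real $q$, this forces $\mu \in \R$.

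Next I would fix a real $q$ transcendental over $\F$ and check that $P_j(q,t)$ is exactly the minimal polynomial $p_q(t)$ of $M + qD_K$ relative to $\hat v_j$. Divisibility $p_q \mid P_j(q,t)$ follows from specializing the identity $P_j(Q, M^K)\hat v_j = 0$; equality of degrees holds because $\F(Q)$-linear independence of $\hat v_j, M^K \hat v_j, \dots, (M^K)^{d_j - 1}\hat v_j$ is encoded by the non-vanishing of a certain polynomial in $Q$ with coefficients in $\F$, which cannot vanish at a transcendental $q$. So $\mu$ is a root of $p_q(t)$, which means that the orthogonal spectral projector $E_\mu(q)$ onto the $\mu$-eigenspace $V_\mu(M + qD_K)$ satisfies $E_\mu(q)\hat v_j \neq 0$.

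The crux is the eigenspace lemma: $V_\mu(M + qD_K) \subseteq \{u \in \R^X : u|_K = 0\}$ for generic $q$. I would prove this via analytic perturbation theory for real symmetric matrices. For generic $q$ the multiplicity of $\mu$ is constant, so one may choose a real-analytic family $u(q) \in V_\mu(M + qD_K)$. Differentiating $(M + qD_K - \mu I) u(q) = 0$ in $q$ gives $(M + qD_K - \mu I) u'(q) = -D_K u(q)$; self-adjointness places $D_K u(q)$ in $V_\mu(M + qD_K)^\perp$, and pairing this against $u(q)$ itself yields $\|u(q)|_K\|^2 = \langle u(q), D_K u(q)\rangle = 0$, i.e.\ $u(q)|_K = 0$. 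Since such families span the $\mu$-eigenspace, every element of $V_\mu(M + qD_K)$ is supported on $X \setminus K$. But $\hat v_j$ is supported on $K$, so orthogonality gives $E_\mu(q)\hat v_j = 0$, contradicting the previous paragraph.

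The main obstacle is this eigenspace lemma. Perturbation theory supplies it cleanly, but a purely algebraic alternative is available and perhaps preferable for a self-contained write-up: write $v(Q) = \sum_k Q^k v_k \in \F(\mu)[Q]^X$ for a polynomial eigenvector of $M^K$ with constant eigenvalue $\mu$, expand $(M + QD_K - \mu I)v(Q) = 0$ by powers of $Q$ to obtain the recursion $(M-\mu I)v_k = -D_K v_{k-1}$ (with $D_K v_m = 0$ at the top), then inductively use Hermitian-inner-product orthogonality on $\C^X$ to show each $v_k \in V_\mu(M) \cap \ker D_K$. Hermitian (rather than real) pairing is needed because $\F$ may contain non-real elements coming from eigenvalues of $H$.
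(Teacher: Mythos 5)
Your proposal is correct, and it shares the paper's opening move and its key phenomenon but implements both halves differently. Like the paper, you use $\deg_Q P_j=1$ to force any factorization into the shape $A(t)B(Q,t)$ with $A\in\F[t]$ non-constant, so that a root $\mu$ of $A$ is an eigenvalue of $M+qD_K$ for \emph{every} $q$; and like the paper, the engine is the fact that such a persistent eigenvalue must have its eigenvectors supported off $K$. The paper packages that engine as Lemma~\ref{lem:vanishing}: a compactness argument as $Q\to 0$ produces $m+1$ orthonormal $\theta$-eigenvectors of the \emph{unperturbed} $M$ vanishing on $K$, and the contradiction is with the multiplicity $b_{\theta,0}$ of $\theta$ in $P_0$ (one must also note that eigenvectors of $M$ killed by $D_K$ remain eigenvectors of $M^K$). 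You instead work at the transcendental specialization itself, show via Rellich-type analytic perturbation theory (or the polynomial-eigenvector recursion) that the whole eigenspace $V_\mu(M+qD_K)$ lies in $\ker D_K$, and land the contradiction on $P_j$'s own definition: $\hat v_j$ is supported on $K$, so $E_\mu(q)\hat v_j=0$, contradicting $\mu$ being a root of the relative minimal polynomial of $\hat v_j$. Your route avoids any bookkeeping with $P_0$ and its multiplicities, at the price of either importing analytic perturbation theory or carrying out the algebraic recursion; the paper's route needs only the elementary limit argument but leans on the definition of $b_{i,0}$.

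Two small points on your write-up. First, in the algebraic alternative the induction must run \emph{upward} from the constant term: $(M-\mu)v_0=0$ gives $v_0\in V_\mu(M)$, then pairing $(M-\mu)v_k=-D_Kv_{k-1}$ against $v_{k-1}$ yields $\lVert D_Kv_{k-1}\rVert^2=0$ and hence $v_k\in V_\mu(M)$; starting from the top-degree relation $D_Kv_N=0$ and descending does not close as cleanly. Second, you should say explicitly that the finitely many exceptional $q$ (where the multiplicity of $\mu$ jumps, or where a specialized minor vanishes) are all algebraic over $\F$, so the transcendental $q$ avoids them; you use this implicitly in both the minimal-polynomial step and the eigenspace lemma. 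Neither point is a gap in the argument, only in its presentation.
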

\begin{proof}

Now suppose for a contradiction that, say, $P_1$ is reducible in $\F[Q,t]$. Since its $Q$-degree is 1, this means $P_1(t) = R(t) \cdot \tilde{P}_1(t)$ for some non-constant $R \in \F[t]$ and $\tilde{P}_1 \in \F[t,Q]$. Let $\theta$ be a root of $R(t)$, and let $m$ denote the multiplicity of $\theta$ as a root of $P_0(t)$.
Then $\theta$ is a root of $\phi(M^K)$ with multiplicity at least $m+1$ for all values of $Q$. Since $D_K$ is a projection matrix, Lemma~\ref{lem:vanishing} implies that there are $m+1$ orthogonal eigenvectors of $M$ (corresponding to the $\theta$ eigenvalue) that vanish on $K$. But then the multiplicity of $\theta$ in $P_0$ should have been at least $m+1$ according to its definition. This is a contradiction, hence each $P_j : j=1,2,\dots,|K|$ is indeed irreducible.  
\end{proof}

\section{Examples}\label{sec:examples}

In this section we apply the above techniques to provide explicit families of graphs where diagonal perturbation can be shown to induce pretty good fractional revival. We use the graph and its adjacency matrix interchangeably. The blueprint is the following:
\begin{itemize}
\item Find a family of graphs and a subset $K$ of the nodes which are $H$-cospectral for some normal matrix $H$ with only simple eigenvalues.
\item Add a transcendental diagonal perturbation to the nodes in $K$.
\item Identify the factorization $\Phi(M^K,t) = \prod P_j(t)$ of the characteristic polynomial as the relative minimal polynomials of the eigenvectors of $H$. 
\item Using this, compute the degrees and traces of the $P_j$ polynomials.
\item The transcendentality of the diagonal perturbation guarantees that the $P_j$ are irreducible over a certain field, hence each part in the partition $\PK$ coincides with the roots of one of the $P_j$s. 
\item Show that for some choice of $i,j$ we have $\Tr P_i / \deg P_i \neq \Tr P_j /\deg P_j$. This implies that $\PK$ is non-degenerate, and hence there is pretty good state transfer relative to $K$. 
\end{itemize}

\subsection{PGFR on 2 nodes}

Here we present two infinite families of graphs, both of which are built on a path of length $2k+1$. In $S_{k,m}$ we take a path with $2k+1$ edges, and add a loop edge of weight $m$ at the $k+1$st node, that is, one of the nodes forming the middle edge. In $T_k$ we take a path with $2k+1$ edges and add two new nodes. One is going to be adjacent to nodes $k$ and $k+1$ on the path, the other is going to be adjacent to node $k+3$ on the path.

In both families, the endpoints of the path exhibit pretty good fractional revival under magnetic fields applied to them.
\vspace{1cm}
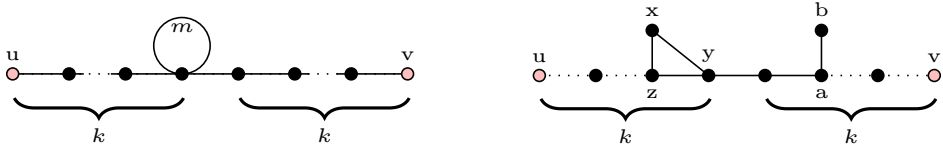
\begin{figure}[!h]
    
    \hspace*{1.5cm}
     \begin{tikzpicture}[transform canvas={scale=1.5}]
        
        \draw[dotted] (0,0) -- (3.5,0); 
       \draw (0,0) -- (0.63,0);
       \draw (0.87,0) -- (2.63,0);
       \draw (2.87,0) -- (3.5,0);
       \draw (1.5,0.25) circle (0.25) node[anchor=south]{\tiny $m$};

        \draw[fill=pink] (0,0) circle (1.5pt) node[anchor=south]{\tiny u};
        \draw[fill=black] (0.5,0) circle (1.5pt);
        \draw[fill=black] (1,0) circle (1.5pt);
        \draw[fill=black] (1.5,0) circle (1.5pt);
        \draw[fill=black] (2,0) circle (1.5pt);
        \draw[fill=black] (2.5,0) circle (1.5pt);        
        \draw[fill=black] (3,0) circle (1.5pt);
        \draw[fill=pink] (3.5,0) circle (1.5pt) node[anchor=south]{\tiny v};
        
        \draw [thick, black, decorate, decoration={brace,amplitude=5pt,mirror}, xshift=0.4pt, yshift=-0.4pt] (0,-0.2) -- (1.5,-0.2) node[black,midway,yshift=-0.33cm] {\tiny $k$};
        \draw [thick, black, decorate, decoration={brace,amplitude=5pt,mirror}, xshift=0.4pt, yshift=-0.4pt] (2,-0.2) -- (3.5,-0.2) node[black,midway,yshift=-0.33cm] {\tiny $k$}; 

        \end{tikzpicture}
        
        \vspace{-.4cm}
        \hspace*{8.5cm}
        \begin{tikzpicture}[transform canvas={scale=1.5}]

        \draw[dotted] (0,0) -- (1.0,0);
        \draw (1,0.4) -- (1.5,0) (1,0) -- (1,0.4);
        \draw (2.5,0) -- (2.5,0.4) (1,0) -- (2.5,0);
        \draw[dotted] (2.5,0) -- (3.5,0);
        
         \draw[fill=pink] (0,0) circle (1.5pt) node[anchor=south]{\tiny u};
        \draw[fill=black] (0.5,0) circle (1.5pt);
        \draw[fill=black] (1,0) circle (1.5pt) node[anchor=north]{\tiny z};
        \draw[fill=black] (1,0.4) circle (1.5pt) node[anchor=south]{\tiny x};
        \draw[fill=black] (1.5,0) circle (1.5pt) node[anchor=south]{\tiny y};
        \draw[fill=black] (2,0) circle (1.5pt);
        \draw[fill=black] (2.5,0) circle (1.5pt) node[anchor=north]{\tiny a};        
        \draw[fill=black] (2.5,.4) circle (1.5pt) node[anchor=south]{\tiny b};
        \draw[fill=black] (3,0) circle (1.5pt);
        \draw[fill=pink] (3.5,0) circle (1.5pt) node[anchor=south]{\tiny v};

       \draw [thick, black, decorate, decoration={brace,amplitude=5pt,mirror}, xshift=0.4pt, yshift=-0.2pt] (0,-0.2) -- (1.5,-0.2) node[black,midway,yshift=-0.33cm] {\tiny $k$};
        \draw [thick, black, decorate, decoration={brace,amplitude=5pt,mirror}, xshift=0.4pt, yshift=-0.2pt] (2,-0.2) -- (3.5,-0.2) node[black,midway,yshift=-0.33cm] {\tiny $k$}; 
        
        \end{tikzpicture}
    \vspace{1cm}    
    \caption{Two families of graphs with cospectral pairs: $S_{k,m}$ (left) and $T_k$ (right).}
    \label{fig:combver}
\end{figure}

\begin{theorem}\label{thm:main_K2}
Let $M$ denote the adjacency matrix of either $S_{k,m}$ or $T_k$, and $K = \{u,v\}$ be the endpoints of the path, as in the figure. Let $Q$ be a transcendental real number. Then $M^K = M + Q \cdot D_K$ exhibits PGFR with respect to $K$.
\end{theorem}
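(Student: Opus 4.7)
The plan is to execute the four-step blueprint stated at the beginning of this section. The argument runs in parallel for both families; the graph-specific content is confined to identifying a suitable $H$ and to computing certain degrees and traces of the associated relative minimal polynomials.

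First I would find a $2 \times 2$ normal matrix $H$ with two distinct eigenvalues such that $K = \{u, v\}$ is $H$-cospectral in $M$. Since $|K| = 2$, Theorem~\ref{thm:frac_cosp} reduces this to checking that the family of restrictions $\{\widetilde{M^k}\}_{k \geq 0}$ is pairwise commuting, in which case any non-trivial member may serve as $H$. By Cayley--Hamilton only finitely many $k$ need be inspected, and for $k$ smaller than the $u$-$v$ graph distance the entries of $\widetilde{M^k}$ collapse to a scalar multiple of identity (hence commute trivially). For $S_{k,m}$ I expect $\widetilde{M^{2k+1}} = \left(\begin{smallmatrix} m & 1 \\ 1 & 0 \end{smallmatrix}\right)$ (the loop contributes $m$ to the $u$-diagonal via the unique shortest closed odd walk through the loop, while $v$ is too far) to be the first non-trivial restriction, and I would take $H$ to be this matrix itself; for $T_k$ an analogous walk-counting argument identifies the appropriate $H$.

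Given $H$-cospectrality, Corollary~\ref{cor:potential} propagates it to $M^K = M + Q D_K$, and Corollary~\ref{cor:phiMfactor} together with Lemma~\ref{lem:K_distinct} gives the factorization $\phi(M^K, t) = P_0(t)P_1(t)P_2(t)$, where $P_j$ is the minimal polynomial of $M^K$ relative to the extension $\hat{v}_j$ of the $j$th eigenvector of $H$. Theorem~\ref{thm:irreducible} then shows $P_1$ and $P_2$ are irreducible over the number field $\F$ generated by the entries of $M$ together with the eigenvalues of $H$, and the corollary following Lemma~\ref{lem:K_distinct} identifies each non-trivial part of $\PK$ with the root set of exactly one of the $P_j$. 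By Claim~\ref{clm:deg_tr}, each $P_j$ is $Q$-linear with $\Tr P_j = Q + \tau_j$ for some $\tau_j \in \F$, so the required inequality $\Tr P_1 / \deg P_1 \neq \Tr P_2 / \deg P_2$ holds automatically (by transcendence of $Q$) whenever $\deg P_1 \neq \deg P_2$, and otherwise reduces to $\tau_1 \neq \tau_2$. Once secured, Theorem~\ref{thm:tr/deg} yields non-degeneracy of $\PK$ and Theorem~\ref{thm:pgfr_pk} concludes PGFR.

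The hard part will be the explicit computation of $(\deg P_j, \tau_j)$ for each family, which amounts to analyzing the Krylov subspaces $\langle (M^K)^s \hat{v}_j : s \geq 0 \rangle$, or, equivalently, to determining which eigenvalues of $M^K$ have eigenvectors whose $K$-restrictions are parallel to $v_j$ rather than $v_{3-j}$. The asymmetries built into the examples---the loop only at the left middle vertex of $S_{k,m}$, and the imbalance between the triangle-like attachment at $x$ and the pendant $b$ in $T_k$---are precisely what should cause $v_1$ and $v_2$ to have genuinely different relative minimal polynomials, but teasing out the exact degrees and constant parts of the traces requires a careful case-by-case analysis that I would carry out in the subsequent subsections.
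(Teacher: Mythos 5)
Your outline coincides with the paper's proof: the same blueprint ($H$-cospectrality $\to$ diagonal perturbation $\to$ factorization into relative minimal polynomials $\to$ irreducibility $\to$ trace/degree test $\to$ Theorem~\ref{thm:tr/deg} and Theorem~\ref{thm:pgfr_pk}), and your identification of $H=\left(\begin{smallmatrix} m & 1\\ 1 & 0\end{smallmatrix}\right)$ for $S_{k,m}$ via $\widetilde{M^{2k+1}}$ is exactly the content of the paper's Lemma~\ref{lem:uv_cospectral} (phrased there as $(M^j)_{u,u}-(M^j)_{v,v}=c\,(M^j)_{u,v}$ with $c=m$, equivalently commutation of every $\widetilde{M^j}$ with your $H$). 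You also correctly note that one must check the $P_j$ do not share roots before concluding that the parts of $\PK$ are the root sets of $P_1^K$ and $P_2^K$; the paper gets this from irreducibility plus $P_1^K\neq P_2^K$.

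The gap is that everything you label ``the hard part'' is precisely where the proof lives, and you have not done it. Two computations are missing. First, $H$-cospectrality for $T_k$ is not an ``analogous'' one-line walk count: the paper's Lemma~\ref{lem:uv_cospectral} has to analyze walks of lengths $2k+1$, $2k+2$, $2k+3$ through the $xyz$ triangle to establish $c=2$ (so $p/q=1+\sqrt2$). Second, and more importantly, for both families the paper shows $\deg P_1^K=\deg P_2^K$, so the easy escape via unequal degrees (transcendence of $Q$) is \emph{not} available; the theorem rests entirely on showing $\tau_1\neq\tau_2$ in your notation. The paper's Lemma~\ref{lem:STfamily_trace} does this by tracking the $y$- and $b$-coordinates of the Krylov vectors $M^j v_1, M^j v_2$ for $j\le k+2$, extracting the top coefficients of $P_1$ and $P_2$ and obtaining $\Tr P_1=-\sqrt2\neq\sqrt2=\Tr P_2$. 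A structural shortcut you may find useful if you carry this out: since $c\in\Z$, the slopes $p/q$ and $-q/p$ are conjugate quadratic integers, and the Galois conjugation swaps $v_1\leftrightarrow v_2$ and hence $P_1\leftrightarrow P_2$; this gives $\deg P_1=\deg P_2$ for free and reduces the whole problem to showing that $\Tr P_1$ is not fixed by the conjugation, i.e.\ is irrational. Until that single inequality is established, the hypothesis of Theorem~\ref{thm:tr/deg} is unverified and the argument does not close.
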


The proof of this result requires some preparation, as outlined in the blueprint above. The first step is to verify fractional cospectrality of $u$ and $v$.

\begin{lemma}\label{lem:uv_cospectral}
Let $M$ denote the adjacency matrix of either $S_{k,m}$ or $T_k$. Then for all $j \geq 0$
\[ (M^j)_{u,u} - (M^j)_{v,v} = c \cdot (M^j)_{u,v} \]
where $c = m$ in the case of $S_{k,m}$ and $c = 2$ in the case of $T_k$. 
\end{lemma}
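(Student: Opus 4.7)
The plan is to translate the identity into one on walk generating functions and reduce it to polynomial identities on the underlying symmetric path. Define $F^G_{xy}(t) := \sum_{j\geq 0}(M_G^j)_{xy} t^j = ((I - tM_G)^{-1})_{xy}$; the lemma is equivalent to the rational identity $F^M_{uu} - F^M_{vv} = c \cdot F^M_{uv}$ as formal power series. Both graphs are finite-rank modifications of the symmetric path $P = P_{2k+2}$, for which the walk generating functions $f_{ij}(t) := ((I - tM_P)^{-1})_{ij}$ have the classical closed form $f_{ij}(t) = t^{|j-i|}\, D_{\min(i,j)-1}(t)\, D_{n-\max(i,j)}(t)/D_n(t)$, where $D_n(t) := \det(I - tM_{P_n})$ satisfies $D_n = D_{n-1} - t^2 D_{n-2}$ with $D_0 = D_1 = 1$. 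Reflection symmetry of the path gives $f_{uu} = f_{vv}$ immediately.

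For $S_{k,m}$, write $M = M_P + m\, e_\ell e_\ell^{\tr}$ with $\ell = k+1$ the middle vertex. The Sherman--Morrison formula gives
\[ F^S_{xy}(t) = f_{xy}(t) + \frac{mt\, f_{x\ell}(t)\, f_{\ell y}(t)}{1 - mt\, f_{\ell\ell}(t)}. \]
Substituting into $F^S_{uu} - F^S_{vv} - m F^S_{uv}$, clearing the common denominator $1 - mt f_{\ell\ell}$, and using $f_{uu} = f_{vv}$, one obtains a polynomial in $m$ whose coefficients of $m^1$ and $m^2$ are $f_{uv} - t(f_{u\ell}^2 - f_{v\ell}^2)$ and $t(f_{uv}\, f_{\ell\ell} - f_{u\ell}\, f_{v\ell})$ respectively. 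The second vanishes identically upon substituting the closed forms (both sides equal $t^{2k+1} D_k D_{k+1}/D_n^2$). The first reduces, after multiplying through by $D_n^2/t^{2k+1}$, to the polynomial identity $D_{k+1}^2 - t^2 D_k^2 = D_{2k+2}$, which is a special case of $D_n D_m - t^2 D_{n-1} D_{m-1} = D_{n+m}$; this general identity follows by induction on $m$ using the three-term recursion, with base case $m=0$ trivial.

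For $T_k$, apply the same blueprint but with block Schur-complement inversion in place of Sherman--Morrison, to eliminate the two ``dangling'' vertices: the triangle apex $x$ (adjacent to $z = k$ and $y = k+1$) and the pendant leaf $b$ (adjacent to $a = k+3$). One obtains $(I - tM_T)^{-1}_{P\times P} = (I - tM_P - t^2 UU^{\tr})^{-1}$ with $U = [e_z + e_y,\ e_a]$, a rank-two perturbation. The Woodbury identity then writes $F^T_{uu}, F^T_{vv}, F^T_{uv}$ as $f_{uu}, f_{vv}, f_{uv}$ plus quadratic forms in the $2$-vectors $\alpha_u = (f_{zu}+f_{yu},\, f_{au})^{\tr}$ and $\alpha_v = (f_{zv}+f_{yv},\, f_{av})^{\tr}$ against the common $2\times 2$ matrix $(I_2 - t^2 U^{\tr} R U)^{-1}$, where $R$ is the path resolvent. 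The reflection symmetry of the path gives $f_{zu} = f_{av}$, $f_{au} = f_{zv}$, and $f_{yu} = f_{(k+2)v}$; substituting these and simplifying, the target $F^T_{uu} - F^T_{vv} - 2 F^T_{uv}$ should collapse to a polynomial identity in the $D_n$'s that is once again verifiable from the same Chebyshev-like recursion.

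The main obstacle is the $T_k$ case: the rank-two Woodbury bookkeeping is considerably more intricate than the rank-one Sherman--Morrison step, and one must show that the specific coefficient $c = 2$ emerges cleanly from the interplay between the two-edge triangle attachment at $\{z,y\}$ and the one-edge pendant attachment at $a$. If the direct algebraic reduction becomes unwieldy, the fallback is induction on $k$: appending one edge to each end of the path is a rank-one modification that can be handled via Sherman--Morrison, so the identity should propagate from a small base case verified by explicit computation.
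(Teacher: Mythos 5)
Your approach is genuinely different from the paper's. The paper reduces to $j\le n-1$ via Cayley--Hamilton and then counts walks directly, exploiting that for small $j$ both sides vanish or agree by symmetry so that only the top few powers need actual enumeration; you instead pass to the resolvent $(I-tM)^{-1}$ and treat both graphs as low-rank perturbations of the symmetric path. For $S_{k,m}$ your argument is complete and correct: the Sherman--Morrison expansion, the vanishing of the $m^2$-coefficient, and the reduction of the $m^1$-coefficient to $D_{k+1}^2-t^2D_k^2=D_{2k+2}$ (an instance of $D_{n+m}=D_nD_m-t^2D_{n-1}D_{m-1}$, provable by two-step induction once one sets $D_{-1}=0$) all check out. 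This buys a closed-form, non-enumerative proof of the $S$-case, arguably cleaner than walk counting.

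The gap is the $T_k$ case, which is the only genuinely delicate part of the lemma: you set up the Schur complement and Woodbury identity correctly and record the right reflection symmetries ($f_{zu}=f_{av}$, $f_{au}=f_{zv}$, $f_{yu}=f_{(k+2)v}$), but the decisive computation --- showing that $F^T_{uu}-F^T_{vv}-2F^T_{uv}$ vanishes, and in particular that the constant $2$ emerges --- is never carried out; ``should collapse'' is a hope, not a proof. Note that after the symmetry substitution the two vectors become $\alpha_u=(f_{zu}+f_{yu},\,f_{au})^{\tr}$ and $\alpha_v=(f_{au}+f_{(k+2)u},\,f_{zu})^{\tr}$, which are \emph{not} related by any simple swap, so the cancellation is nontrivial and must be exhibited. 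I would urge you to promote your fallback to the main argument: if $G'$ is obtained from $G$ by attaching a pendant $u'$ at $u$ and $v'$ at $v$, then writing $E=[e_u,e_v]$ and $N=I-tM_G$, the $\{u',v'\}$ block of $(I-tM_{G'})^{-1}$ equals $(I_2-t^2E^{\tr}N^{-1}E)^{-1}$, whence $F^{G'}_{u'u'}-F^{G'}_{v'v'}=t^2(F^G_{uu}-F^G_{vv})/\Delta$ and $F^{G'}_{u'v'}=t^2F^G_{uv}/\Delta$ with the same determinant $\Delta$; so the identity with the same constant $c$ propagates exactly under appending one pendant edge at each end. This reduces $T_k$ (and $S_{k,m}$) to a single finite base case ($k=2$ for $T_k$), which still must be verified explicitly --- as written, neither the Woodbury route nor the base case of the induction is done, so the $T_k$ half of the lemma remains unproved.
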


\begin{proof}
Let $n$ be the size of the matrix $M$. Since $M^j$, for any $j \geq n$ is a linear combination of $M^0, M^1, \dots, M^{n-1}$, it suffices to check the equation for $j=0,\dots, n-1$. 

First we consider the $S_{k,m}$ case. Here $n = 2k+2$. For $j=0,1,\dots,2k$ it is clear that $(M^j)_{u,u} = (M^j)_{v,v}$ and $(M^j)_{u,v} = 0$. For $j = 2k+1$ we have $(M^{2k+1})_{u,u} - (M^{2k+1})_{v,v} = m$ and $(M^{2k+1})_{u,v} = 1$. Hence $c=m$ satisfies the equation for all $j=0,\dots, n-1$. 

Next we consider $T_k$. Here $n = 2k+4$. Clearly, $(M^j)$ counts walks of length $j$ in the graph. This graph is symmetric, except for the $xy$ edge. For $j < 2k+4$ no $v \to v$ walk of length $j$ can use the $xy$ edge, and for $j < 2k+1$ no $u \to u$ walk of length $j$ can use the $xy$ edge. Hence for $j < 2k+1$ we have $(M^j)_{u,u} = (M^j)_{v,v}$, and for $2k+1 \leq j \leq 2k+3$ we have $(M^j)_{u,u} - (M^j)_{v,v} = $ the number of such $u\to u$ walks of length $j$ that do use the $xy$ edge.  For $j=2k+1$ this number is 2 since you can only use this edge once and thus the rest of the walk has to be straight between $u$ and $x$ (respectively between $u$ and $y$). For $j=2k+2$ such a walk will have to use the $xy$ edge twice due to parity constraints. But then it has to use it twice back-to-back due to distance constraints. Hence there are only two such walks: $u \to x \to y \to x \to u$ and $u \to y \to x \to y \to u$. For both $j=2k+1$ and $j=2k+2$ there is clearly only a single $u\to v$ walk of this length. Hence $(M^j)_{u,u} - (M^j)_{v,v} = 2 \cdot (M^j)_{u,v}$ for $j=0,1,\dots, 2k+2$. 

It remains to check the same expression holds for $j=2k+3$. Due to parity and length constraints, each $u\to u$ walk of length $2k+3$ must ``go around'' the $xyz$ triangle exactly once. Reversal of the walk yields a bijection between those walks that go around clockwise and those that do so counterclockwise. By similar considerations, each $u \to v$ walk of length $2k+3$ must traverse the $zy$ edge in this direction. ****
\end{proof}

\begin{corollary}\label{cor:uv_example}~
\begin{enumerate}
\item According to Remark~\ref{rem:equiv_def}, in both families $K = \{u,v\}$ is $H$-cospectral for a 2-by-2 matrix $H$ whose eigenvectors are $(p,q)$ and $(-q,p)$ where $p/q - q/p = c$. 
\item Furthermore, according to Corollary~\ref{cor:phiMfactor} and Lemma~\ref{lem:K_distinct}, as long as $c \neq \pm \infty$, the characteristic polynomial factors as $\Phi(M,t) = P_0(t) P_1(t) P_2(t)$ where $P_1(t)$ (respectively $P_2(t)$) is the minimal polynomial of $M$ relative to $v_1$ (respectively $v_2)$  where these vectors are defined as 
\[ v_1(x) = \left \{ \begin{array}{cc} p/q :& x=u \\ 1 :& x=v \\ 0 :& x\neq u,v \end{array} \right.  \hspace{1cm} v_2(x) = \left \{ \begin{array}{cc} -q/p :& x=u \\ 1 :& x=v \\ 0 :& x\neq u,v \end{array} \right. \]
\item Then, according to Corollary~\ref{cor:potential}, $K$ is also $H$-cospectral in $M^K = M + Q D_K$, and $\Phi(M^K,t) = P^K_0(t) P^K_1(t,Q) P^K_2(t,Q)$ where $P^K_1$ and $P^K_2$ are degree 1 in $Q$, and for any choice of a transcendental $Q_0$, the one-variable polynomials $P^K_i(t,Q_0) : i = 1,2$ are irreducible over $\Q(p/q, Q_0)$.
\end{enumerate}
\end{corollary}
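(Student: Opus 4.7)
The plan is to assemble the three parts by direct citation of preceding results; Lemma~\ref{lem:uv_cospectral} supplies the only computational input, and everything else is routine bookkeeping once the machinery of Section~\ref{sec:gen-cospectral} is applied.

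For Part 1, I would invoke Remark~\ref{rem:equiv_def}: $H$-cospectrality when $|K|=2$ with $H$ having orthogonal eigenvectors $(p,q)$ and $(-q,p)$ is exactly two-node fractional cospectrality in the sense of \cite[Theorem 3.3(ii)]{chan2020fundamentals}. The condition in that reference amounts to the identity $(M^j)_{u,u}-(M^j)_{v,v} = c\,(M^j)_{u,v}$ for all $j$, with $c = p/q - q/p$. Lemma~\ref{lem:uv_cospectral} supplies exactly this identity with $c=m$ (for $S_{k,m}$) or $c=2$ (for $T_k$); solving the quadratic $p/q - q/p = c$ for $p/q \in \R$ yields a real ratio, and any choice of $(p,q)$ realizing that ratio defines an admissible $H$.

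Part 2 is then a direct application of Corollary~\ref{cor:phiMfactor} combined with Lemma~\ref{lem:K_distinct}(1). The assumption $c \neq \pm\infty$ guarantees that both $p$ and $q$ are nonzero, so $(p,q)$ and $(-q,p)$ are linearly independent and $H$ has two distinct eigenvalues; the corresponding $F_j$'s are rank one, so Lemma~\ref{lem:K_distinct}(1) applies and identifies $P_j(t)$, for $j=1,2$, with the minimal polynomial of $M$ relative to the zero-extension of the $j$th $H$-eigenvector. The vectors $v_1, v_2$ written out in the statement are precisely (nonzero scalar multiples of) those zero-extensions expressed in the coordinates $\{u,v\}$, so the identification is immediate.

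For Part 3, Corollary~\ref{cor:potential} tells us that $H$-cospectrality is preserved under the diagonal perturbation, so $K$ is $H$-cospectral in $M^K = M + Q D_K$. Re-applying Corollary~\ref{cor:phiMfactor} and Lemma~\ref{lem:K_distinct}(1) to $M^K$ yields the factorization $\Phi(M^K,t) = P^K_0(t) P^K_1(t,Q) P^K_2(t,Q)$, with $P^K_j$ the minimal polynomial of $M^K$ relative to $\hat{v}_j$. Claim~\ref{clm:deg_tr} then gives that $P^K_j$ is degree one in $Q$ for $j=1,2$, and Theorem~\ref{thm:irreducible} gives irreducibility in $\F[Q,t]$, which, by Gauss's lemma as already used in Claim~\ref{clm:deg_tr}, passes to irreducibility of the specialization $P^K_j(t,Q_0)$ over $\F(Q_0) = \Q(p/q,Q_0)$ for any $Q_0$ transcendental over $\F$.

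The only non-mechanical point to check is the compatibility of fields: Theorem~\ref{thm:irreducible} needs $\F$ to contain all entries of $M$ together with all roots of $\phi(H)$. The entries of $M$ are integers (with the single exception of the loop weight $m$ in $S_{k,m}$, which lies in $\Q$), while the eigenvalues of $H$ can be computed directly from $p/q$ by the quadratic, so $\F = \Q(p/q)$ suffices in both examples; this verification is the main, but minor, obstacle.
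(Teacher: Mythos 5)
Your proposal is correct and follows exactly the route the paper intends: the corollary is stated without a separate proof precisely because it is the assembly of Remark~\ref{rem:equiv_def} with Lemma~\ref{lem:uv_cospectral}, then Corollary~\ref{cor:phiMfactor} and Lemma~\ref{lem:K_distinct}, then Corollary~\ref{cor:potential}, Claim~\ref{clm:deg_tr}, and Theorem~\ref{thm:irreducible}, which is what you do. Your closing observation that the base field $\F$ must be taken as $\Q(p/q)$ (so that $\hat{v}_1,\hat{v}_2$ have entries in $\F$ and Theorem~\ref{thm:irreducible} applies) is a worthwhile point that the paper leaves implicit, and you resolve it correctly.
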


Next, we show that the traces of $P^K_1$ and $P^K_2$ are not equal to each other.

\begin{lemma}\label{lem:STfamily_trace}
With the above notation, $\deg P^K_1 = \deg P^K_2$ but $\Tr P^K_1 \neq \Tr P^K_2$. In particular they are not the same polynomial.
\end{lemma}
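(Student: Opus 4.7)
The plan is to reduce both claims to statements about the unperturbed matrix $M$ and its relative minimal polynomial $P_j$ at $\hat v_j$, and then to compute via an explicit ``folded'' basis of the cyclic subspace $V_j = \langle M^\ell \hat v_j : \ell \ge 0 \rangle$.

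For the reduction, by Theorem~\ref{thm:frac_cosp} and the simplicity of the spectrum of $H$, the restriction $\widetilde{M^\ell} v_j$ is a scalar multiple of $v_j$, so $D_K M^\ell \hat v_j$ is a scalar multiple of $\hat v_j$. Hence $V_j$ is both $M$- and $D_K$-invariant, hence $M^K$-invariant; combined with the identity $M \hat v_j = M^K \hat v_j - Q \hat v_j$, an induction shows that the cyclic subspace of $\hat v_j$ under $M^K$ is precisely $V_j$. This gives $\deg P^K_j = \deg P_j$. For the trace, Claim~\ref{clm:deg_tr} gives $\Tr P^K_j = Q + c_j$ with $c_j \in \F$, and specializing $Q=0$ identifies $c_j = \Tr P_j$. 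Thus it suffices to prove $\deg P_1 = \deg P_2$ and $\Tr P_1 \ne \Tr P_2$ for the unperturbed $M$.

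For $S_{k,m}$, set $w_j = p\, e_{u_{j+1}} + q\, e_{u_{2k+2-j}}$ for $j=0,\ldots,k$. A direct computation gives $w_0 = \hat v_1$, $M w_0 = w_1$, and $M w_j = w_{j-1}+w_{j+1}$ for $1 \le j \le k-1$. At $j=k$, the loop of weight $m$ at $u_{k+1}$ yields $M w_k = w_{k-1} + (p/q)\, w_k$, where the coefficient $p/q$ appears precisely because of the defining identity $p^2-q^2 = mpq$. Hence $V_1 = \langle w_0,\ldots,w_k\rangle$ and $M|_{V_1}$ in this basis is a tridiagonal matrix with diagonal $(0,\ldots,0,p/q)$, giving $\deg P_1 = k+1$ and $\Tr P_1 = p/q$. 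The analogous construction with $w'_j = -q\, e_{u_{j+1}} + p\, e_{u_{2k+2-j}}$ gives $\deg P_2 = k+1$ and $\Tr P_2 = -q/p$. Both $p,q$ are nonzero (forced by $p^2-q^2 = mpq$), so $\Tr P_1 - \Tr P_2 = (p^2+q^2)/(pq) \ne 0$.

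For $T_k$ the same strategy applies, but the folded basis of $V_j$ must be augmented by an extra vector $w^{*} = p\, e_x + q\, e_b$ (and $w'^{*} = -q\, e_x + p\, e_b$ for $V_2$), and the recursions pick up additional boundary terms from the triangle and the pendant. Unwinding these terms using $p^2-q^2 = 2pq$ should again yield $\deg P_1 = \deg P_2$ together with explicit formulas for $\Tr P_1, \Tr P_2$ whose difference is a nonzero rational function of $p,q$. The main obstacle I expect here is this bookkeeping: because the triangle and the pendant are not symmetric under the $u \leftrightarrow v$ swap, the folded basis does not close up as cleanly as for $S_{k,m}$, and care is needed to avoid inadvertently placing elements in $V_1 \cap V_2$, which would contradict the orthogonality guaranteed by Theorem~\ref{thm:frac_cosp}.
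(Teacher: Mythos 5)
Your reduction to the unperturbed matrix is sound and parallels the paper's: the $D_K$-invariance of the cyclic subspace $V_j$ (via $H\widetilde{M^\ell}=\widetilde{M^\ell}H$ and simplicity of the spectrum of $H$) gives $\deg P^K_j=\deg P_j$, and Claim~\ref{clm:deg_tr} together with specialization at $Q=0$ gives $\Tr P^K_j=Q+\Tr P_j$. Your treatment of $S_{k,m}$ is complete and correct: the folded basis closes up exactly because of $p^2-q^2=mpq$, and reading $\Tr P_1=p/q$, $\Tr P_2=-q/p$ off the tridiagonal form is arguably cleaner than the paper's approach (the paper only details $T_k$ and asserts $S_{k,m}$ is ``similar but simpler'').

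The genuine gap is the $T_k$ case, and the obstacle you flag is real, not mere bookkeeping. Let $\sigma$ be the mirror involution of the vertex set swapping $u\leftrightarrow v$ and $x\leftrightarrow b$. The folded span $\langle p\,e_w+q\,e_{\sigma w}\rangle$ is \emph{not} $M$-invariant: applying $M$ to $p\,e_y+q\,e_{\sigma y}$ produces the term $p\,e_x$ from the asymmetric $xy$ edge with no compensating $q\,e_b$, so the image leaves the folded span. The cyclic subspace $V_1$ does have dimension $k+2$, but at least one of its natural basis vectors is genuinely non-folded (one workable basis consists of the folded vectors up to the one supported on $\{z,a\}$, then the \emph{sum} of the $\{y,\sigma y\}$- and $\{x,b\}$-folded vectors, and finally $(p+q)e_y+p\,e_{\sigma y}+p\,e_x$; verifying $M$-invariance again uses $p^2-q^2=2pq$, and the last vector carries the entire trace $1+q/p$). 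Until such a computation is carried out, neither $\deg P_1=\deg P_2$ nor $\Tr P_1\neq\Tr P_2$ is established for $T_k$, so the lemma is only half proved. The paper avoids constructing $V_1$ explicitly: it computes the walk counts $(M^j)_{u,y},(M^j)_{v,y},(M^j)_{u,b},(M^j)_{v,b}$ for $j\le k+2$, deduces $\deg P_1=\deg P_2=k+2$ from a dimension count, and then extracts the top two coefficients of $P_j$ from the $y$- and $b$-coordinates of the relation $P_j(M)\hat v_j=0$, yielding $\Tr P_1=-\Tr P_2=\pm\sqrt2$. Either completing your augmented folded basis or switching to this coordinate computation would close the gap.
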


\begin{proof}
Since $c$ in Lemma~\ref{lem:uv_cospectral} is always an integer and $p/q, -q/p$ are the roots of $x-1/x= c$, it follows that $p/q$ and $-q/p$ are always quadratic integers that are each others' conjugates. This conjugation maps $v_1$ to $v_2$ in Corollary~\ref{cor:uv_example} and thus also $P^K_1$ to $P^K_2$. So these polynomials always have the same degree. They are both linear in $Q$, and, by definition, substituting $Q=0$ into them we get $P_1$ and $P_2$ respectively. So it is sufficient to show that $\Tr P_1 \neq \Tr P_2$. We will do so by examining specific coordinates of $M^j v_1$ and $M^j v_2$ for $j \leq k+2 = n/2$. 

We outline the computation for the $T_k$ case only, as the $S_{k,m}$ case follows in a similar but simpler way. For $T_k$ we saw that $c=2$ and thus $p/q = 1+\sqrt{2}$ and $-q/p = 1-\sqrt{2}$. The following are easily checked by direct calculation of walk counts in $T_k$:
\begin{align*}
(M^j)_{u,y} =& \left\{ \begin{array}{cl} 0 :& j < k \\ 1:& j=k \\ 1 :& j= k+1 \\ k+3 :^*& j =k+2 \end{array} \right. &
(M^j)_{v,y} =& \left\{ \begin{array}{cl} 0 :& j \leq k \\ 1 :& j= k+1 \\ 0 :& j =k+2 \end{array} \right. \\
(M^j)_{u,b} =& 0 : j \leq k+2 &
(M^j)_{v,b} =& \left\{ \begin{array}{cl} 0 :& j < k \\ 1:& j=k \\ 0 :& j= k+1 \\ k+1 :^*& j =k+2 \end{array} \right. 
\end{align*}
The entries marked by $(*)$ can be seen by noting that a walk of the appropriate length will use exactly one edge more than once. Specifying which edge this is determines the walk. In the $u \to y$ case this can be any of the edges on the $uy$ path, the $xz$ edge, or any of the other two edges incident to $y$. In the $v \to b$ case this can be any of the edges on the $va$ path, or one of the other two edges incident to $a$. 

Hence we get that 
\begin{align*}
M^j v_1|_{y,b} =& \left\{ \begin{array}{cl} (0,0):& j < k \\ (1 + \sqrt{2}, 1) :& j = k\\  (2+\sqrt{2},0)  :& j = k+1 \\ ( k+3 +(k+3)\sqrt{2}, k+1)  :& j = k+2 \end{array} \right. \\
M^j v_2|_{y,b} =& \left\{ \begin{array}{cl} (0,0):& j < k \\ (1 - \sqrt{2}, 1) :& j = k\\  (2-\sqrt{2},0)  :& j = k+1 \\ ( k+3 -(k+3)\sqrt{2}, k+1)  :& j = k+2 \end{array} \right. 
\end{align*}

On one hand, these calculations imply that the degree of $P_1$ and $P_2$ are both at least $k+2$, but since their product has degree $2k+4$, their degrees must, in fact, be equal to $k+2$. If $P_1(t) = x^{k+2} + c_1 x^{k+1} + c_2 x^k + \dots$ then $k+1 + c_2 = 0$ from the $b$ coordinate of $0 = P_1(M)v_1$, and $(k+3)(1+\sqrt{2}) + c_1(2+\sqrt{2}) + c_2(1+\sqrt{2}) =0$ from the $y$ coordinate of $0 = P_1(M)v_1$. Hence $c_2 = -k-1$ and $c_1 = (2+2\sqrt{2})/(2+\sqrt{2}) = 
\sqrt{2}$. So $\Tr P_1 = -\sqrt{2}$. Similarly $\Tr P_2 = \sqrt{2}$. 
\end{proof}

We are finally ready to prove the validity of our examples.

\begin{proof}[Proof of Theorem~\ref{thm:main_K2}]
According to Corollary~\ref{cor:uv_example} the polynomials $P^K_1$ and $P^K_2$ are irreducible and distinct from $P^K_0$. By Lemma~\ref{lem:STfamily_trace} they are distinct from each other, thus it follows that none of these polynomials share roots. Then by part 2) of Lemma~\ref{lem:K_distinct}, the eigenvalue support consists entirely of pairs of roots of $P^K_1$ and pairs of roots of $P^K_2$. Thus, the partition $\PK = (\Pi_0, \Pi_1, \Pi_2)$ is such that $\Pi_1$ consists exactly of all roots of $P^K_1$ and $\Pi_2$ consists exactly of all roots of $P^K_2$. Now, Lemma~\ref{lem:STfamily_trace} combined with Theorem~\ref{thm:tr/deg} implies that the partition $\PK$ is non-degenerate. Hence, our result follows by Theorem~\ref{thm:pgfr_pk}.
\end{proof}

\subsection{Cyclic Symmetry}

\newcommand{\ord}{r}

Consider a graph $G$ with an automorphism $T : V(G) \to V(G)$ of order $\ord$. We denote its adjacency matrix by $M$. Let $K \subset V(G)$ be an orbit of cardinality $\ord$. In this section we establish certain conditions under which $M^K = M + Q \cdot D_K$ exhibits PGFR relative to $K$. This turns out to be easier than for the previous examples.

Since $T$ has order $\ord$, it must act as a cyclic permutation on any orbit of cardinality $\ord$, and on $K$ in particular. Let $H$ denote the $K \times K$ permutation matrix describing the (cyclic) action of $T$ on $K$. It is obvious then that $H$ commutes with $\widetilde{M^k}$ for any $k \geq 0$, and thus $K$ is $H$-cospectral in $M$, and also in $M^K$ by Corollary~\ref{cor:potential}.  
 
 The eigenvalues of $H$ are simple, with eigenvectors 
 \begin{equation}\label{eq:vk}
 v_k = (1,\rho^k, \rho^{2k}, \dots, \rho^{(\ord-1)k})  : k = 1,\dots, \ord, 
 \end{equation} 
 where $\rho = e^{\frac{2\pi i}{\ord}}$. Thus $\Phi(M^K,t) = P_0(t) \prod_{k=1}^{\ord} P_k(t,Q)$ and according to Lemma~\ref{lem:K_distinct}, for each $k=1, \dots, p$ the polynomial $P_k(t)$ is the relative minimal polynomial of $v_k$. According to Theorem~\ref{thm:irreducible} the polynomials $P_1, \dots, P_{\ord}$ are irreducible, so they have disjoint sets of roots unless they are the same polynomial. Hence in the partition $\PK$ each part is the set of roots of one of the $P_k$ polynomials. 

\begin{lemma}\label{lem:main_cyclic}
Let $G,T,K$ be as above. If either $\deg P_i \neq \deg P_j$ or $\Tr P_i \neq \Tr P_j$ for some $i,j \geq 1$ then $M^K$ exhibits PGFR relative to $K$.
\end{lemma}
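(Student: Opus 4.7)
My plan is to deduce this from Theorem~\ref{thm:tr/deg}, applied to the partition $\PK$ over the base field $\F(Q)$, by showing that the hypothesis of the lemma is already enough to give the trace-over-degree inequality that theorem requires.

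First I would collect what has already been set up in the paragraph preceding the lemma: by Theorem~\ref{thm:irreducible} each $P_k$ with $k\ge 1$ is irreducible in $\F[Q,t]$, and the non-vanishing parts of $\PK$ are in bijection with the distinct polynomials among $P_1,\dots,P_r$ (if two $P_k$'s coincide they collapse into a single part; otherwise irreducibility forces disjoint root sets). Claim~\ref{clm:deg_tr} then pins down the rigid form $\Tr P_k = Q + c_k$ with $c_k \in \F$.

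The heart of the argument is a short computation: for any $i,j\ge 1$, writing $d_i = \deg P_i$ and $d_j = \deg P_j$,
\[ \frac{\Tr P_i}{\deg P_i} - \frac{\Tr P_j}{\deg P_j} \;=\; \frac{(d_j - d_i)\,Q \;+\; (c_i d_j - c_j d_i)}{d_i d_j} \]
in $\F(Q)$. This rational function vanishes exactly when $d_i = d_j$ \emph{and} $c_i = c_j$, i.e., exactly when $\deg P_i = \deg P_j$ \emph{and} $\Tr P_i = \Tr P_j$ both hold. Hence the lemma's hypothesis is precisely the statement that some pair of ratios $\Tr P_i/\deg P_i$ and $\Tr P_j/\deg P_j$ differ as elements of $\F(Q)$, and this inequality is preserved when evaluating at the specific transcendental $Q$ fixed in the setup. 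With such a pair in hand, I would note that $P_i \neq P_j$ forces disjoint root sets (both are irreducible) and hence two distinct parts of $\PK$, invoke Theorem~\ref{thm:tr/deg} to obtain non-degeneracy of $\PK$, and conclude PGFR via Theorem~\ref{thm:pgfr_pk}.

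The only step that requires genuine thought is the key computation above: it converts the apparently weaker lemma hypothesis (``a single degree \emph{or} a single trace differs'') into the trace-over-degree inequality demanded by Theorem~\ref{thm:tr/deg}. The conversion works only because of the very specific form $\Tr P_k = Q + (\text{const})$ established in Claim~\ref{clm:deg_tr}; without that rigidity, differing degrees alone would not in general suffice to separate the ratios. Everything else is bookkeeping.
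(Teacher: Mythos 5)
Your proposal is correct and follows essentially the same route as the paper: both reduce the lemma to Theorem~\ref{thm:tr/deg} by using the rigidity $\Tr P_k = Q + c_k$ with $c_k \in \F$ from Claim~\ref{clm:deg_tr} together with the transcendence of $Q$ to show that a difference in either degree or trace already separates the ratios $\Tr P_k/\deg P_k$. Your explicit computation of the difference of ratios just spells out what the paper states in one line, and your remarks on distinct irreducible $P_k$'s having disjoint root sets match the paper's preceding paragraph.
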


\begin{proof}
By Claim~\ref{clm:deg_tr} each of the $P_k : k =1,\dots, \ord$ polynomials have $\Tr P_k - Q \in \Q(\rho)$. Since $Q$ is transcendental, if $\deg P_i \neq \deg P_j$ then $\Tr P_i / \deg P_i \neq \Tr P_j / \deg P_j$. On the other hand, if $\deg P_i =\deg P_j$ but $\Tr P_i \neq \Tr P_j$ then again clearly $\Tr P_i / \deg P_i \neq \Tr P_j / \deg P_j$. Thus, we are done by Theorem~\ref{thm:tr/deg}.
\end{proof}

\subsubsection{Orbits of Unequal Size}

 \begin{theorem}\label{thm:main_psymm}
 Let $G$ and $T$ as above. Let $d$ denote the largest distance between $K$ and any other node of $G$. If $\ord(d+1) > |V(G)|$ then $M^K = M+Q\cdot D_K$ exhibits PGFR with respect to $K$ for any transcendental $Q \in \R$. 
  \end{theorem}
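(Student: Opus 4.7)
The plan is to apply Lemma~\ref{lem:main_cyclic} by exhibiting two indices whose polynomials have different degrees; specifically, I would show that $\deg P_\ord \geq d+1$ while $\deg P_k \leq d$ for every $k \in \{1, \ldots, \ord-1\}$ with $\gcd(k,\ord) = 1$. (Such a $k$ exists whenever $\ord \geq 2$; if $\ord = 1$ the hypothesis $\ord(d+1) > |V(G)|$ never holds, so the theorem is vacuous.)

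For the lower bound, I would use a ``leading shell'' argument. Note that $\hat{v}_\ord = \1_K$. Since $M^K = M + Q D_K$ differs from $M$ only on diagonal entries indexed by $K$, any length-$j$ walk in $M^K$ from a vertex in $K$ to a vertex $w$ at graph-distance exactly $j$ from $K$ cannot use a $D_K$-self-loop (which would waste a step without progressing), so $((M^K)^j \1_K)(w) = (M^j \1_K)(w) > 0$. Writing $N_j$ for the set of vertices at distance $\leq j$ from $K$, the support of $(M^K)^j \1_K$ lies in $N_j$, while its restriction to $N_j \setminus N_{j-1}$ is nonzero for each $j = 0,1,\ldots,d$. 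Restricting a vanishing linear combination of $\1_K, M^K\1_K, \ldots, (M^K)^d \1_K$ successively to the shells $N_d \setminus N_{d-1}$, $N_{d-1}\setminus N_{d-2}$, and so on, forces all coefficients to vanish. Hence $\deg P_\ord = \dim\langle (M^K)^j \hat{v}_\ord : j\geq 0\rangle \geq d+1$.

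For the upper bound, let $P_T$ denote the permutation matrix of $T$ on all of $V(G)$. Since $T$ is an automorphism of $G$ preserving $K$ setwise, $P_T$ commutes with both $M$ and $D_K$, hence with $M^K$. A direct computation using the cyclic action $T(w_i) = w_{(i+1) \bmod \ord}$ on $K$ gives $P_T \hat{v}_k = \rho^{-k} \hat{v}_k$; therefore the entire cyclic subspace $W_k = \langle (M^K)^j \hat{v}_k : j \geq 0\rangle$ lies inside the $\rho^{-k}$-eigenspace of $P_T$. The dimension of this eigenspace equals the number of $T$-orbits $O_\ell$ on which $\rho^{-k}$ occurs as an $|O_\ell|$-th root of unity. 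Since every orbit size $|O_\ell|$ divides $\ord$ and $\gcd(k,\ord) = 1$, the condition $(\rho^{-k})^{|O_\ell|} = 1$ forces $|O_\ell| = \ord$. Letting $a$ denote the number of full (size-$\ord$) orbits of $T$, we conclude $\deg P_k \leq a$.

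The hypothesis then closes the argument: the $a$ full orbits alone contribute $a\ord$ vertices, so $|V(G)| \geq a\ord$, and combining with $\ord(d+1) > |V(G)|$ yields $a \leq d$. Therefore $\deg P_k \leq a \leq d < d+1 \leq \deg P_\ord$, and Lemma~\ref{lem:main_cyclic} produces PGFR relative to $K$. I do not foresee a serious obstacle; the only two delicate points are the observation that shortest walks cannot afford $D_K$-self-loops (used for the lower bound) and the correct identification of $\dim E_{\rho^{-k}}(P_T)$ via orbit sizes of $T$ (used for the upper bound).
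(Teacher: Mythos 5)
Your proof is correct, and while the lower bound $\deg P_r \geq d+1$ (via the strictly growing support of $(M^K)^j\hat{v}_r$, where $v_r$ is the all-ones vector) is exactly the paper's argument, your upper bound takes a genuinely different route. The paper simply counts: since $\sum_j \deg P_j = |V(G)|$ and $\deg P_r \geq d+1$, pigeonhole gives some $j_0 \in \{1,\dots,r-1\}$ with $\deg P_{j_0} \leq (|V(G)|-d-1)/(r-1) < d+1$, the last inequality being the hypothesis. You instead bound $\deg P_k$ for every $k$ coprime to $r$ by noting that the permutation matrix $P_T$ commutes with $M^K$ and that $\hat{v}_k$ is a $P_T$-eigenvector with primitive-root eigenvalue, so the cyclic subspace $W_k$ sits inside a $P_T$-eigenspace whose dimension is the number $a$ of full orbits; then $ar \leq |V(G)| < r(d+1)$ gives $a \leq d$. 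Both arguments are sound (your side remarks — that shortest walks cannot use a $D_K$-loop, and the orbit-size computation of the eigenspace dimension — check out, as does the reduction to $r \geq 2$). The paper's version is shorter and uses no structure beyond the degree sum; yours is more informative, since it gives a uniform structural bound $\deg P_k \leq a$ for all $k$ coprime to $r$ and explains \emph{why} these polynomials are small (the twisted eigenspaces of $P_T$ only see the full orbits), which could be useful if one wanted to identify the partition $\PK$ more precisely rather than merely prove non-degeneracy.
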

 
 \begin{remark}
 For this condition to hold, it is necessary that not all orbits of $T$ have size $\ord$. 
 \end{remark}
 
 \begin{proof}
 Since there is a node of $G$ that is $d$ distance away from $K$, and since $v_{\ord} = (1,1,\dots, 1)$, we see that for each $j = 0, 1, \dots, d$ the vectors $(M^K)^j v_p$ have strictly growing support, hence they can't be linearly dependent. Thus $\deg P_p \geq d+1$. Let $j_0$ be such that $P_j$ has the smallest degree among $P_1, \dots, P_{\ord-1}$.  We have $|V(G)|  = \deg P_0 + \deg P_p + \sum_{j=1}^{\ord-1} \deg P_j \geq d+1 + (p-1)\deg P_{j_0}$, hence $\deg P_{j_0} \leq (|V(G)|-d-1)/(p-1) < d+1$ according to the conditions on $d$ and $|V(G)|$. Thus $\deg P_{j_0} < d+1 \leq \deg P_p$ and thus the statement follows from Lemma~\ref{lem:main_cyclic}. 
 \end{proof}
 
\subsubsection{Cycles with Added Diamond Graphs}

In this section, we consider another family with general cyclic symmetry.  We define $G_{\ord}$ to be the graph obtain by starting with a cycle $C_{\ord}$ of order $\ord$, and attaching along each edge a diamond graph (two triangles sharing an edge).  The graph $G_5$ is pictured in Figure \ref{fig:cycle_diamond}.
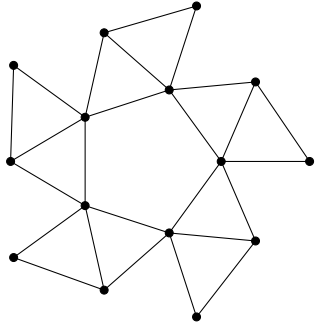
\begin{figure}[!h]
\begin{center}
\begin{tikzpicture}
\draw[fill=black] (0:1) circle (1.5pt) (72:1) circle (1.5pt) (144:1) circle (1.5pt) (216:1) circle (1.5pt) (288:1) circle (1.5pt) (36:1.8) circle (1.5pt) (108:1.8) circle (1.5pt) (180:1.8) circle (1.5pt) (252:1.8) circle (1.5pt) (324:1.8) circle (1.5pt) (0:2.175) circle (1.5pt) (72:2.175) circle (1.5pt)  (144:2.175) circle (1.5pt)  (216:2.175) circle (1.5pt) (288:2.175) circle (1.5pt) ;
\draw (0:1)--(72:1)--(144:1)--(216:1)--(288:1)--(360:1)--(36:1.8)--(72:1)--(108:1.8)--(144:1)--(180:1.8)--(216:1)--(252:1.8)--(288:1)--(324:1.8)--(360:1)--(0:2.175)--(36:1.8) (72:1)--(72:2.175)--(108:1.8) (144:1)--(144:2.175)--(180:1.8) (216:1)--(216:2.175)--(252:1.8) (288:1)--(288:2.175)--(324:1.8);
\end{tikzpicture}
\end{center}
\caption{The graph $G_5$.}\label{fig:cycle_diamond}
\end{figure}
Note that $G_\ord$ has the cyclic group of order $\ord$ as its automorphism group.  There are three orbits of the automorphism group, consisting of the vertices of degree 5, the vertices of degree 3, and the vertices of degree 2 respectively.  

\begin{theorem}
Let $K$ be the vertices of any single orbit, for instance the vertices of the central cycle $C_\ord$, and as above $M^K=M+Q\cdot D_K$ for transcendental $Q$.  Then $M^K$ exhibits PGFR with respect to $K$.
\end{theorem}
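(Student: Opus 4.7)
The plan is to apply Lemma~\ref{lem:main_cyclic}: it suffices to exhibit indices $i,j\ge 1$ with $\Tr P_i\neq \Tr P_j$ and $\deg P_i=\deg P_j$. I will use the cyclic $\Z/\ord\Z$ symmetry generated by $T$ to block-diagonalize $M^K$ into $\ord$ blocks of size $3\times 3$, one per isotypic component, and read off $\deg P_k$ and $\Tr P_k$ directly from each block.

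Label the three orbits $c_0,\dots,c_{\ord-1}$ (cycle vertices), $x_0,\dots,x_{\ord-1}$ (degree-$3$ inner vertices, with $x_i$ adjacent to $c_i,c_{i+1}$), and $y_0,\dots,y_{\ord-1}$ (degree-$2$ outer vertices, with $y_i$ adjacent to $c_i$ and $x_i$, as read off from Figure~\ref{fig:cycle_diamond}). For each $k\in\{1,\dots,\ord\}$ the $k$-th isotypic component $V_k\subset\C^{V(G_\ord)}$ under $T$ is three-dimensional, spanned by the vectors $e^{(c)}_k,e^{(x)}_k,e^{(y)}_k$ defined by $e^{(c)}_k(c_j)=\rho^{jk}$ (zero on the other two orbits) and analogously for the other two. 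A direct adjacency computation shows that the restriction of $M$ to $V_k$ in this basis is
\[
M_k \;=\; \begin{pmatrix} \alpha_k & 1+\rho^{-k} & 1 \\ 1+\rho^k & 0 & 1 \\ 1 & 1 & 0 \end{pmatrix}, \qquad \alpha_k = \rho^k + \rho^{-k},
\]
and the perturbation $Q\cdot D_K$ simply adds $Q$ to the first, second, or third diagonal entry, according to whether $K$ is the cycle, inner, or outer orbit. In every case $\Tr M_k^K = \alpha_k + Q$.

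Next I identify $P_k$ with the characteristic polynomial of $M_k^K$. Since the cyclic subspace $\langle \hat{v}_k, M^K\hat{v}_k,(M^K)^2\hat{v}_k,\dots\rangle$ lies inside the three-dimensional space $V_k$, it suffices to verify that the first three iterates are linearly independent. Computing the $3\times 3$ Krylov determinant in each of the three cases $K\in\{C,X,Y\}$ yields $\rho^k(\rho^k+2)$, $1+\rho^k-\rho^{-k}$, and $-2\rho^{-k}$ respectively---all nonzero for every $k$ and, notably, independent of $Q$. Consequently $P_k=\phi(M_k^K,t)$, so $\deg P_k=3$ and $\Tr P_k=\alpha_k+Q$ for every $k=1,\dots,\ord$.

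To conclude, $\alpha_\ord=2$ while $\alpha_1=2\cos(2\pi/\ord)<2$ for $\ord\geq 2$, so $\Tr P_1\neq \Tr P_\ord$ and both have degree $3$. Lemma~\ref{lem:main_cyclic} immediately yields PGFR of $M^K$ relative to $K$. The only non-routine step is the case-by-case Krylov determinant computation establishing cyclicity of $\hat{v}_k$ in $V_k$; beyond this, everything is a direct application of the general framework developed in this section.
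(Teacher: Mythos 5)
Your proof is correct and follows essentially the same route as the paper: decompose $M^K$ into $3\times 3$ blocks via the cyclic symmetry, identify $P_k$ with the characteristic polynomial of the block $N_k$, read off $\Tr P_k=\lambda_k+Q$, and invoke Lemma~\ref{lem:main_cyclic}. You are in fact somewhat more careful than the paper in two respects --- you verify via the Krylov determinants that $\hat{v}_k$ is cyclic in its isotypic component (the paper dismisses this as ``easy to see''), and you treat all three choices of orbit $K$ explicitly rather than only the central cycle.
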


\begin{proof}
By Lemma~\ref{lem:main_cyclic} it suffices to show that $\Tr P_{\ord} \neq \Tr P_k$ for some other $1 \leq k \leq \ord -1$, where $P_k$ is the minimal polynomial of $M^K$ relative to the $v_k$ vector defined in \eqref{eq:vk}.

Let us define $A_{C_\ord}$ to be the adjacency matrix for $C_\ord$ and \[R=\begin{bmatrix}1&1&0&0&\cdots& 0\\0&1&1&0&\cdots&0\\0&0&1&1&\cdots&0\\\vdots&\vdots&\vdots&\ddots&\ddots&\vdots\\0&0&0&\cdots&1&1\\1&0&0&\cdots&0&1\end{bmatrix}\] and note that 
\[M^K = \begin{bmatrix}A_{C_\ord}+Q\cdot I&R&I\\R^T&0&I\\I&I&0\end{bmatrix}.\]
Let $\lambda_k=\rho^{-k}+\rho^k$ and observe that simple calculation yields 
\begin{align*}
A_{C_n}v_k &= \lambda_k v_k\\
R v_k &= (1+\rho^k) v_k\\
R^T v_k &= (1+\rho^{-k}) v_k.
\end{align*}
Thus we can verify that 
\[
\begin{bmatrix}A_{C_\ord}+Q\cdot I&R&I\\R^T&0&I\\I&I&0\end{bmatrix}\begin{bmatrix}a v_k\\b v_k\\ c v_k\end{bmatrix}=\begin{bmatrix}a' v_k\\b ' v_k \\ c' v_k\end{bmatrix}
\]
as an equation of $3\ord \times 3$ matrices, where
\[
\begin{bmatrix}\lambda_k+Q&1+\rho^k&1\\1+\rho^{-k}&0&1\\1&1&0\end{bmatrix} \begin{bmatrix} a \\ b \\ c \end{bmatrix}= \begin{bmatrix}a' \\ b' \\ c' \end{bmatrix}.
\]

It is easy to see that $\{ (a v_k, b v_k, c v_k) : a,b,c \in \R\}$ is the subspace generated by $(M^K)^j v_k : j = 0,1, \dots$, hence the roots of the relative minimal polynomial $P_k$ are exactly the eigenvalues of 
\[
N_k:=\begin{bmatrix}\lambda_k+Q&1+\rho^k&1\\1+\rho^{-k}&0&1\\1&1&0\end{bmatrix}.
\]
 Thus $P_k$ has degree 3, and we can see that $\Tr P_k = \Tr N_k = \lambda_k+Q$ for each $k$. Note in particular that $\Tr P_\ord \neq \Tr P_k$ for $k\neq \ord$, and hence Lemma~\ref{lem:main_cyclic} finishes the proof. 
\end{proof}
 
\section{Appendix}

\begin{lemma}\label{lem:imE}
Let $E \in \R^{X\times X}$ be a projection, and $K\subset X$. If $v = E w$ then there is a $w' \in \R^K$ such that $\tilde{v} = \tilde{E} w'$. Here we used the notation from Section~\ref{sec:h-cospectral}.
\end{lemma}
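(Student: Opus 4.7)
The plan is to phrase the claim as $\tilde v \in \im \tilde E$ and verify this by an orthogonality argument, using that $E$ is understood to be an orthogonal (i.e.\ self-adjoint) projection so that $E = E^{\tr} = E^2$. Let $\Pi \in \R^{K\times X}$ be the restriction-to-$K$ operator, as in the proof of Theorem~\ref{thm:gluing}, so that $\tilde v = \Pi v$, $\hat u = \Pi^{\tr} u$, and $\tilde E = \Pi E \Pi^{\tr}$.

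The first observation is that $\tilde E$ is itself self-adjoint and positive semidefinite: $\tilde E^{\tr} = \Pi E^{\tr} \Pi^{\tr} = \tilde E$, and for any $u \in \R^K$, $u^{\tr} \tilde E u = u^{\tr} \Pi E^2 \Pi^{\tr} u = \sprod{E \hat u, E \hat u} \geq 0$. In particular, $\im \tilde E = (\ker \tilde E)^\perp$ inside $\R^K$.

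The key step is to upgrade $\tilde E u = 0$ to $E \hat u = 0$. This falls out of the same identity as above: if $u \in \ker \tilde E$ then
\[
\znorm{E \hat u}^2 = \sprod{E \hat u, E \hat u} = \sprod{\hat u, E \hat u} = \sprod{u, \tilde E u} = 0,
\]
using $E^{\tr}E = E$ and equation~\eqref{eq:tilde}. Hence $E\hat u = 0$ whenever $u \in \ker \tilde E$.

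To conclude, take any $u \in \ker \tilde E$ and compute
\[
\sprod{\tilde v, u} = \sprod{v, \hat u} = \sprod{E w, \hat u} = \sprod{w, E \hat u} = 0,
\]
so $\tilde v$ lies in $(\ker \tilde E)^\perp = \im \tilde E$, which is exactly what we need. The only place where any non-triviality enters is the middle display, where the PSD-plus-self-adjointness of $\tilde E$ lets us turn a one-sided vanishing (on $K$) into a two-sided vanishing (on all of $X$); once that is in hand the rest is formal manipulation of the $\Pi,\Pi^{\tr}$ calculus from~\eqref{eq:tilde}.
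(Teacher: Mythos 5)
Your proof is correct. It proves the same statement by the same underlying mechanism as the paper --- both arguments ultimately rest on $\tilde E$ being self-adjoint, so that $\im\tilde E = (\ker\tilde E)^\perp$, and on the fact that for an orthogonal projection $\sprod{\hat u, E\hat u} = \znorm{E\hat u}^2$, which upgrades $\tilde E u = 0$ to $E\hat u = 0$ --- but the organization differs. The paper introduces the auxiliary matrix $F = E_{K\times X}$, observes that $\{\tilde v : v\in\im E\} = \im F$, and then computes $\ker\tilde E$ and $\im F$ explicitly from a rank-one decomposition $E = \sum_j v_j v_j^{\tr}$ coming from an orthonormal basis of $\im E$; you avoid both $F$ and the basis, and instead show directly that $\tilde v$ annihilates $\ker\tilde E$. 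Your version is arguably cleaner and makes explicit a step the paper glosses over (that $\tilde E w = \sum_j \tilde v_j(\tilde v_j^{\tr}w) = 0$ forces each $\tilde v_j^{\tr}w = 0$ even though the $\tilde v_j$ need not be independent --- which is again the PSD trick). One shared caveat: both proofs use that $E$ is an \emph{orthogonal} projection, i.e.\ $E = E^{\tr} = E^2$, which the lemma statement does not say explicitly; you flag this assumption, and it holds in the paper's application since the $E_i$ are spectral idempotents of a symmetric matrix.
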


\begin{proof}
Let $F = E_{K \times X}$. Then $\{ \tilde{v} : v \in \im E\} = \im F$. So it is sufficient to show that $\im \tilde{E} = \im F$. In fact, $\im \tilde{E} \leq \im F$ by definition, so showing $\im F \leq \im \tilde{E}$ suffices.

Since $E$ is a projection, we can write $E = \sum_j v_j v_j^{\tr}$, and $\tilde{E} = \sum_j \tilde{v_j} \tilde{v_j}^{\tr}$. Thus \begin{equation}\label{eq:kerE}
\ker \tilde{E} = \{ w \in \R^K | \forall j: \tilde{v_j}^{\tr} w = 0\}.\end{equation} Since $\tilde{E}$ is self-adjoint, $\im \tilde{E}$ is the orthogonal complement of $\ker \tilde{E}$. From \eqref{eq:kerE} it is clear that $\im \tilde{E} = \sprod{\tilde{v_1}, \tilde{v_2}, \dots}$. On the other hand, $F = \sum \tilde{v_j} v_j^{\tr}$, so clearly $\im F \leq  \sprod{\tilde{v_1}, \tilde{v_2}, \dots} = \im \tilde{E}$.
\end{proof}

\begin{lemma}\label{lem:vanishing}
Let $M$ be a symmetric matrix and $N$ be a projection matrix. Suppose $\theta$ has multiplicity $k$ as an eigenvalue of $M+ Q \cdot N$ for all $Q \in \R$. Then $\ker N$ contains $k$ orthonormal eigenvectors of $M$, each with eigenvalue $\theta$.
\end{lemma}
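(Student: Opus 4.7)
The plan is to invoke Rellich's theorem on real-analytic perturbations of self-adjoint matrices. Since $M + QN$ is a polynomial (hence real-analytic) family of symmetric matrices in the parameter $Q$, there exist real-analytic eigenvalue branches $\lambda_1(Q), \dots, \lambda_n(Q)$ and a real-analytic orthonormal frame of eigenvectors $v_1(Q), \dots, v_n(Q)$ defined globally on $\R$.

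The first step is to show that at least $k$ of the branches $\lambda_i$ are identically equal to $\theta$. For each $i$ let $A_i = \{ Q \in \R : \lambda_i(Q) = \theta\}$. The hypothesis that $\theta$ has multiplicity exactly $k$ at every $Q$ means $\sum_i \mathbf{1}_{A_i}(Q) = k$ for all $Q \in \R$. Integrating over $[0,1]$ forces some $A_i$ to have positive Lebesgue measure, hence an accumulation point, so by the identity theorem for analytic functions $\lambda_i \equiv \theta$. Removing this branch and iterating the argument $k$ times yields $k$ branches, say $\lambda_1, \dots, \lambda_k$, with $\lambda_i(Q) \equiv \theta$.

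The second step is a differentiation trick. For each such $i$, differentiate the identity $(M + QN) v_i(Q) = \theta v_i(Q)$ in $Q$ to obtain
\[
N v_i(Q) = \bigl(\theta I - M - QN\bigr)\, v_i'(Q).
\]
Pairing with $v_i(Q)$ and using self-adjointness of $M + QN$, the right-hand side equals $\langle (\theta I - M - QN) v_i(Q),\, v_i'(Q)\rangle = 0$. Hence $\langle v_i(Q), N v_i(Q)\rangle = 0$ for all $Q$. Because $N$ is an orthogonal projection, $\langle v, N v\rangle = \|Nv\|^2$, so $N v_i(Q) = 0$, and consequently $M v_i(Q) = (M+QN) v_i(Q) = \theta v_i(Q)$. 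Evaluating at $Q = 0$ produces the desired $k$ orthonormal vectors $v_1(0), \dots, v_k(0)$ lying in $\ker N$ and satisfying $M v_i(0) = \theta v_i(0)$.

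The main obstacle is arranging the analytic labelling of eigenvalues and eigenvectors globally; this is exactly what Rellich's theorem supplies, and the self-adjointness of $M + Q N$ is essential (non-normal perturbations can exhibit eigenvalue collisions that prevent analytic diagonalization). An alternative avoiding Rellich would be to note that $(t-\theta)^k \mid \phi(M+QN, t)$ in $\R[Q,t]$ (since at each fixed $Q$ the univariate polynomial in $t$ has this divisibility and the coefficients are polynomials in $Q$) and then do a formal power series expansion of a putative eigenvector in $Q$; but handling the solvability of the resulting order-by-order equations is more cumbersome than the analytic approach above.
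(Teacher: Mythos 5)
Your proof is correct, but it takes a genuinely different route from the paper's. The paper argues inductively and elementarily: given $j<k$ orthonormal vectors already found in $\ker N$, it picks for each $Q$ a unit $\theta$-eigenvector $v_Q$ of $M+QN$ orthogonal to them, extracts a subsequential limit $w$ as $Q\to 0$ by compactness, and then uses exactly the same self-adjointness identity you use --- pairing $Nv_Q$ against the limiting eigenvector to get $\sprod{Nv_Q,w}=0$, hence $\sprod{Nw,w}=\znorm{Nw}^2=0$ in the limit. Your version replaces the compactness/limit argument with Rellich's theorem on analytic self-adjoint perturbations, which is a heavier external tool but buys a cleaner structure: the measure-theoretic (or, even more simply, the ``zero set of a nonconstant analytic function is discrete'') argument isolates $k$ branches identically equal to $\theta$, and the derivative identity then gives $Nv_i(Q)=0$ for \emph{all} $Q$ at once rather than only in a limit, with orthonormality of the $k$ vectors coming for free from the analytic frame instead of being maintained inductively. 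Two small points to make explicit: the hypothesis should be read as multiplicity \emph{at least} $k$ (which is how the paper applies the lemma, and your integration argument handles this reading just as well), and the step $\sprod{v,Nv}=\znorm{Nv}^2$ requires $N$ to be a symmetric idempotent, i.e.\ an orthogonal projection --- the same assumption the paper's proof silently uses.
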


\begin{proof}
Suppose we already exhibited $0 \leq j \leq k-1$ such orthonormal vectors, $w_1, \dots, w_j \in \ker N$. We exhibit one more as follows. Let $v_q$ be a unit length eigenvector of $M+Q \cdot N$ with eigenvalue $\theta$ that is orthogonal to $w_1, \dots, w_j$. Such vectors exist because the multiplicity of $\theta$ is more than $j$, and $w_1, \dots, w_j$ are all eigenvectors of $M+ Q \cdot N$ as well. Let $w$ be a subsequential limit of the $v_q$s as $Q \to 0$. Such a limit exists because of compactness. From now on we restrict $Q$ to such a subsequence. Clearly $w$ is unit length and orthogonal to $w_1, \dots, w_j$. Taking limit as $Q \to 0$ in $(M+Q \cdot N)v_q = \theta v_q$ gives that $M w= \theta w$.  It remains to show that $w \in \ker N$.

Taking the scalar product of both sides with $w$, and using the symmetry of $M$ as well as $M w = \theta w$, we obtain
\[ \theta \sprod{v_q, w}=  \sprod{M v_q, w} + Q \sprod{N v_q, w} = \sprod{v_q, M w} + Q \sprod{N v_q, w} = \sprod{v_q, \theta w} + Q \sprod{N v_q, w} . \]
Hence $\sprod{N v_q, w} =0$, from which we get $\sprod{N w,w}=\sprod{Nw, Nw} = 0$ after passing to the limit and using that $N$ is idempotent. Thus $N w =0$ as claimed.
\end{proof}



\end{document}